\theoremstyle{plain}
\newtheorem{thm}{\protect\theoremname}
\theoremstyle{plain}
\newtheorem{cor}[thm]{\protect\corollaryname}
\theoremstyle{definition}
\newtheorem{example}[thm]{\protect\examplename}
\providecommand{\corollaryname}{Corollary}
\providecommand{\examplename}{Example}
\providecommand{\theoremname}{Theorem}
\begin{document}
\subjclass[2010]{03D45.}

\title{Game arguments in some existence theorems of Friedberg numberings}

\author{Takuma Imamura}

\address{Department of Mathematics\\
University of Toyama\\
3190 Gofuku, Toyama 930-8555, Japan}

\curraddr{Research Institute for Mathematical Sciences\\
Kyoto University\\
Kitashirakawa-Oiwake-cho, Sakyo-ku, Kyoto 606-8502, Japan}

\email{timamura@kurims.kyoto-u.ac.jp}
\begin{abstract}
We provide game-theoretic proofs of some well-known existence theorems
of Friedberg numberings for the class of all partial computable functions,
including (1) the existence of two incomparable Friedberg numberings;
(2) the existence of a uniformly c.e. sequence of pairwise incomparable
Friedberg numberings; (3) the existence of a uniformly c.e. independent
sequence of Friedberg numberings. Parameterizing these proofs, we
have game-theoretic proofs of Kummer's criteria and their modifications.
\end{abstract}

\keywords{Friedberg numbering; Rogers semilattice; two-player infinite game.}
\maketitle

\section{Introduction}

Rogers \cite{Rog58} introduced the notion of computable numbering
of all partial computable functions and the notion of reducibility.
He showed that the set of all equivalence classes of computable numberings
forms an upper semilattice with respect to reducibility, nowadays
called the Rogers semilattice. He asked whether this semilattice is
a lattice, and if not, whether any two elements have a lower bound.
Friedberg \cite{Fri58} constructed an injective computable numbering
(called a Friedberg numbering) by a finite-injury priority argument.
Every Friedberg numbering is minimal in the Rogers semilattice. Pour-El
\cite{Pou64} showed that there are two incomparable Friedberg numberings
through modifying Friedberg's construction. Thus both of Rogers' questions
were negatively answered.

Shen \cite{She12} gave some examples of game-theoretic arguments
in computability theory and algorithmic information theory. In particular,
he gave a game-theoretic proof of Friedberg's theorem. The game representation
of Friedberg's construction is clean, and can be used to prove other
existence theorems of Friedberg numberings as we will demonstrate
in this paper.

In \prettyref{sec:Friedberg's construction}, we present Shen's proof
for later use. We provide game-theoretic proofs of two well-known
criteria for c.e. classes of partial computable functions to have
a Friedberg numbering. In \prettyref{sec:Modifications}, we give
two game-theoretic proofs of Pour-El's result using two different
games. We also give game-theoretic proofs of some existence criteria
of an infinite c.e. sequence and an independent sequence of Friedberg
numberings. These proofs are essentially modifications of Shen's proof.

\section{Notations and Definitions}

We denote by $\mathcal{P}^{\left(1\right)}$ the set of all partial
computable functions from $\mathbb{N}$ to $\mathbb{N}$. $\braket{\cdot,\cdot}$
is a computable pairing function which is a computable bijection between
$\mathbb{N}^{2}$ and $\mathbb{N}$.

Let $\mathcal{A}$ be any set. A surjective map $\nu\colon\mathbb{N}\to\mathcal{A}$
is called a \emph{numbering} of $\mathcal{A}$. Let $\nu$ and $\mu$
be numberings of $\mathcal{A}$. We say that $\nu$ is \emph{reducible
to} $\mu$, denoted by $\nu\leq\mu$, if there exists a total computable
function $f\colon\mathbb{N}\to\mathbb{N}$ such that $\nu=\mu\circ f$.
We say that $\nu$ and $\mu$ are \emph{equivalent} if they are reducible
to each other. We say that $\nu$ and $\mu$ are \emph{incomparable}
if they are not reducible to each other.

We often identify a numbering $\nu$ of a set of partial maps from
$X$ to $Y$ with the partial map $\nu\left(i,x\right)=\nu\left(i\right)\left(x\right)$
from $\mathbb{N}\times X$ to $Y$. A numbering $\nu$ of a subset
of $\mathcal{P}^{\left(1\right)}$ is said to be \emph{computable}
if it is computable as a partial function from $\mathbb{N}^{2}$ to
$\mathbb{N}$. A computable injective numbering is called a \emph{Friedberg
numbering}. A sequence $\set{\nu_{i}}_{i\in\mathbb{N}}$ of numberings
of a subset of $\mathcal{P}^{\left(1\right)}$ is said to be \emph{uniformly
c.e.} if it is uniformly c.e. as a sequence of partial functions from
$\mathbb{N}^{2}$ to $\mathbb{N}$, or equivalently, if it is computable
as a partial function from $\mathbb{N}^{3}$ to $\mathbb{N}$. We
say that a sequence $\set{\nu_{i}}_{i\in\mathbb{N}}$ of numberings
of a set $\mathcal{A}$ is \emph{independent} if $\nu_{i}\nleq\bigoplus_{j\neq i}\nu_{j}$
for all $i\in\mathbb{N}$, where $\bigoplus_{i\in\mathbb{N}}\nu_{j}$
is the direct sum of $\set{\nu_{i}}_{i\in\mathbb{N}}$ defined by
$\bigoplus_{i\in\mathbb{N}}\nu_{i}\left(\braket{j,k}\right)=\nu_{j}\left(k\right)$.

Let $\mathcal{A}$ and $\mathcal{B}$ be subsets of $\mathcal{P}^{\left(1\right)}$.
We say that $\mathcal{B}$ is \emph{strongly dense in} $\mathcal{A}$
provided that every finite subfunction of a member of $\mathcal{A}$
has infinitely many extensions in $\mathcal{B}$.

\section{\label{sec:Friedberg's construction}Friedberg's construction: the
infinite game with two boards}
\begin{thm}[{Friedberg \cite[Corollary to Theorem 3]{Fri58}}]
\label{thm:Fri58-Corollary-to-Theorem3}$\mathcal{P}^{\left(1\right)}$
has a Friedberg numbering.
\end{thm}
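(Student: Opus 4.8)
The plan is to reformulate the construction as Shen's two-board game and exhibit a computable winning strategy for the builder. On the first board an adversary enumerates all of $\mathcal{P}^{(1)}$ value by value: at each stage it may announce finitely many new facts of the form ``$\varphi_e(x)=y$'', consistently with what has been announced. On the second board we build a numbering $\gamma$, also value by value, each announced value ``$\gamma_i(x)=y$'' being permanent. We win if (i) every $\varphi_e$ occurs among the $\gamma_i$, and (ii) the $\gamma_i$ are pairwise distinct as partial functions. A computable winning strategy, run against the true standard enumeration, then produces a computable $\gamma$ whose range is all of $\mathcal{P}^{(1)}$ (by (i), since the first board lists \emph{every} partial computable function, while each $\gamma_i$ is manifestly partial computable) and which is injective by (ii); that is, a Friedberg numbering.

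For the strategy we maintain an injective partial assignment $e\mapsto\mathrm{col}(e)$ of first-board indices to second-board columns, together with a priority order in which a smaller index counts as higher priority. The idea is to mirror each $\varphi_e$ into its column $\mathrm{col}(e)$, but \emph{cautiously}: we extend a column towards its target only so far as to keep the contents of all columns pairwise distinct, and, where we have a choice of which announced value of $\varphi_e$ to copy next, we use that freedom to ``route around'' a looming collision. When a column can no longer make progress because every admissible extension would duplicate the current content of some higher-priority column, we \emph{retire} that column -- its finite content is frozen forever -- and reassign its index to a fresh empty column, keeping that index dormant until the first board reveals enough information to separate it from the offending higher-priority index.

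The verification then splits into three parts. \emph{Finite injury}: each index $e$ is retired and reassigned at most once for each higher-priority index, of which there are finitely many, so either $\mathrm{col}(e)$ reaches a final value or $e$ ends up permanently dormant (precisely when some smaller index computes the same function). \emph{Injectivity}: the invariant that all column contents are pairwise distinct is preserved at every finite stage, and contents only grow, so the limiting columns $\gamma_i$ -- retired ones keeping their frozen finite content, active ones growing to their targets -- are pairwise distinct. \emph{Surjectivity}: given a partial computable $h$, let $e$ be its least index; one shows that after all higher-priority indices have settled, $e$'s column is never retired again and is eventually built up to all of $h$.

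The main obstacle, and the reason the retire/reassign rule must be set up with care, is reconciling injectivity with surjectivity. A crude rule -- retire a column the moment its content coincides with that of a higher-priority column -- fails: a finite function that happens to be a proper restriction of some infinite partial computable function carrying a smaller index can be perpetually crowded out and never realized. One must therefore choose a robust conflict criterion and keep enough bookkeeping to guarantee simultaneously that no two columns perpetually chase the same function and that every finite partial computable function is realized exactly once. Getting this coordination right is the heart of Friedberg's argument, and the point of the game rendering is that it makes the bookkeeping transparent.
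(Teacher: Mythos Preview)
Your game setup matches the paper's: Alice enumerates a computable numbering on one board, Bob builds an injective one on the other, and a computable winning strategy for Bob yields a Friedberg numbering. The divergence is in Bob's strategy, and that is where your proposal has a genuine gap.

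You propose a priority scheme in which a blocked column is \emph{retired}---its finite content frozen forever---while its index moves to a fresh column. But you never say how the retired columns are controlled. Each one carries some finite partial function $f$ determined by whatever Alice happened to have played; nothing in your mechanism prevents $f$ from coinciding with some $\varphi_{e'}$ that another column is obliged to realise. Your stage-by-stage distinctness invariant then forbids that other column from ever reaching $f$: either it stalls at a proper subfunction (so its limit is not $\varphi_{e'}$, and you must now argue that this stalled content does not itself collide with something else), or it is retired in turn and the fresh column for $e'$ faces the same obstruction, undermining your finite-injury count. Your final paragraph correctly names this coordination problem as ``the heart of Friedberg's argument'' but does not carry it out; identifying the obstacle is not the same as resolving it.

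The paper (following Shen) avoids freezing altogether. An abandoned row is \emph{odd-ified}: extended to a fresh partial function with finite domain of odd size. Since Bob first arranges, harmlessly, that no row of $A$ is ever odd, the odd-ified rows of $B$ cannot collide with anything Bob is trying to copy from $A$. One additional assistant then lists all odd functions exactly once, absorbing the garbage. This cleanly decouples ``copy each $A$-row once'' from ``keep all $B$-rows distinct'', which is precisely the bookkeeping your sketch leaves open.
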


\begin{proof}[Proof (Shen \cite{She12})]
First, we consider an infinite game $\mathcal{G}_{0}$ and prove
that the existence of a computable winning strategy of $\mathcal{G}_{0}$
for the second player implies the existence of a Friedberg numbering
of $\mathcal{P}^{\left(1\right)}$. The game $\mathcal{G}_{0}$ is
defined as follows:

\begin{description}
\item [{Players}] Alice, Bob.
\item [{Protocol}] FOR $s=0,1,2,\ldots$:

Alice announces a finite partial function $A_{s}\colon\mathbb{N}^{2}\rightharpoonup\mathbb{N}$.

Bob announces a finite partial function $B_{s}\colon\mathbb{N}^{2}\rightharpoonup\mathbb{N}$.
\item [{Collateral duties}] $A_{s}\subseteq A_{s+1}$ and $B_{s}\subseteq B_{s+1}$
for all $s\in\mathbb{N}$.
\item [{Winner}] Let $A=\bigcup_{s\in\mathbb{N}}A_{s}$ and $B=\bigcup_{s\in\mathbb{N}}B_{s}$.
Bob wins if

\begin{enumerate}
\item for each $i\in\mathbb{N}$, there exists a $j\in\mathbb{N}$ such
that $A\left(i,\cdot\right)=B\left(j,\cdot\right)$;
\item for any $i,j\in\mathbb{N}$, if $i\neq j$, then $B\left(i,\cdot\right)\neq B\left(j,\cdot\right)$.
\end{enumerate}
\end{description}
We consider $A$ and $B$ as two boards, $A$-table and $B$-table.
Each board is a table with an infinite number of rows and columns.
Each player plays on its board. At each move player can fill finitely
many cells with any natural numbers. The collateral duties prohibit
players from erasing cells.

A strategy is a map that determines the next action based on the previous
actions of the opponent. Since any action in this game is a finitary
object, we can define the computability of strategies via g\"odelization.
Suppose that there exists a computable winning strategy for Bob. Let
Alice fill $A$-table with the values of some computable numbering
of $\mathcal{P}^{\left(1\right)}$ by using its finite approximation,
and let Bob use some computable winning strategy. Clearly $B$ is
a Friedberg numbering of $\mathcal{P}^{\left(1\right)}$.

Second, we consider an infinite game $\mathcal{G}_{1}$, which is
a modification of $\mathcal{G}_{0}$, and describe a computable winning
strategy of $\mathcal{G}_{1}$. The game $\mathcal{G}_{1}$ is as
follows:
\begin{description}
\item [{Players}] Alice, Bob.
\item [{Protocol}] FOR $s=0,1,2,\ldots$:

Alice announces a finite partial function $A_{s}\colon\mathbb{N}^{2}\rightharpoonup\mathbb{N}$.

Bob announces a finite partial function $B_{s}\colon\mathbb{N}^{2}\rightharpoonup\mathbb{N}$
and a finite set $K_{s}\subseteq\mathbb{N}$.
\item [{Collateral duties}] $A_{s}\subseteq A_{s+1}$, $B_{s}\subseteq B_{s+1}$
and $K_{s}\subseteq K_{s+1}$ for all $s\in\mathbb{N}$.
\item [{Winner}] Let $A=\bigcup_{s\in\mathbb{N}}A_{s}$, $B=\bigcup_{s\in\mathbb{N}}B_{s}$
and $K=\bigcup_{s\in\mathbb{N}}K_{s}$. Bob wins if

\begin{enumerate}
\item for each $i\in\mathbb{N}$, there exists a $j\in\mathbb{N}\setminus K$
such that $A\left(i,\cdot\right)=B\left(j,\cdot\right)$;
\item for any $i,j\in\mathbb{N}\setminus K$, if $i\neq j$, then $B\left(i,\cdot\right)\neq B\left(j,\cdot\right)$.
\end{enumerate}
\end{description}
We consider that in this game Bob can \emph{invalidate} some rows
and that we ignore invalid rows when we decide the winner. Bob cannot
validate invalid rows again.

To win this game, Bob hires a countable number of assistants who guarantee
that each of the rows in $A$-table appears in $B$-table exactly
once. They can reserve rows in $B$-table exclusively, fill their
reserved rows, and invalidate their reserved rows. At each move, the
assistants work one by one. To ensure that only finitely many assistants
work at each move, the $i$-th assistant starts working at move $i$.
The instruction for the $i$-th assistant is as follows: \textit{if
you have no reserved row, reserve the first unused row. Let $k$ be
the number of rows such that you have already invalidated. If, in
the current state of $A$-table, the first $k$ positions of the $i$-th
row are identical to the first $k$ positions of some previous row,
invalidate your reserved row. If you have a reserved row, copy the
current contents of the $i$-th row of $A$-table into your reserved
row.} These instructions guarantee, in the limit, that
\begin{enumerate}
\item each of the rows in $B$-table has been reserved or invalidated;
\item if the $i$-th row in $A$-table is identical to some previous row,
then the $i$-th assistant invalidates reserved rows infinitely many
times, so there is no permanently reserved row;
\item if not, the $i$-th assistant invalidates reserved rows only finitely
many times, so there is a permanently reserved row.
\end{enumerate}
\noindent In the second case, the $i$-th assistant faithfully copies
the contents of the $i$-th row of $A$-table into the permanently
reserved row. As a consequence, Bob wins the simplified game. Now
we verify the above three claims. The first claim is trivial, because
each assistant reserves the first unused row at least once. To verify
the second and the third one, suppose that the $i$-th row in $A$-table
is not identical to any previous row in the limit. For each previous
row, select some column witnessing that this row is not identical
to the $i$-th row. Let $k$ be the maximum of the selected columns.
Wait for convergence of the rectangular area $\left[0,i\right]\times\left[0,k\right]$
of $A$-table. After that, the first $k$ positions of the $i$-th
row in $A$-table are not identical to the first $k$ positions of
any previous row, and hence the $i$-th assistant invalidates reserved
rows at most $k$ times. Conversely, suppose that the $i$-th assistant
invalidates reserved rows only finitely many times. Let $k$ be the
number of invalidations. After the $k$-th invalidation, the $i$-th
row in $A$-table is not identical to any previous row, and the same
is true in the limit.

Finally, we describe a computable winning strategy of $\mathcal{G}_{0}$
through modifying the winning strategy of $\mathcal{G}_{1}$ described
above. We say that a row is odd if it contains a finite odd number
of non-empty cells. We can assume without loss of generality that
odd rows never appear in $A$-table: if Alice fills some cells in
a row making this row odd, Bob ignores one of these cells until Alice
fills other cells in this row. We replace invalidation with \emph{odd-ification}:
instead of invalidating a row, fill some cells in this row making
it new and odd. Bob consider that odd-ified rows in $B$-table are
invalid. This modification guarantees that each of the non-odd rows
of $A$-table appears in $B$-table exactly once. Bob hires an additional
assistant who guarantees that each odd row appears in $B$-table exactly
once. At each move, the additional assistant reserves some row exclusively
and fills some cells in this row making it new and odd so that all
odd rows are exhausted in the limit. Thus Bob wins this game, and
the theorem is proved.
\end{proof}
Kummer \cite{Kum90a} gave a priority-free proof of the existence
of a Friedberg numbering of $\mathcal{P}^{\left(1\right)}$. Let us
split $\mathcal{P}^{\left(1\right)}$ into $\mathcal{P}^{\left(1\right)}\setminus\mathcal{O}$
and $\mathcal{O}$, where $\mathcal{O}$ is the set of all \emph{odd}
partial functions. The key observation is that $\mathcal{P}^{\left(1\right)}\setminus\mathcal{O}$
has a computable numbering, $\mathcal{O}$ has a Friedberg numbering,
and $\mathcal{O}$ is strongly dense in $\mathcal{P}^{\left(1\right)}\setminus\mathcal{O}$.
He provided the following useful criterion.
\begin{cor}[{Kummer \cite[Extension Lemma]{Kum89}}]
\label{cor:Kum89-Extension-Lemma}Let $\mathcal{A}$ and $\mathcal{B}$
be disjoint subsets of $\mathcal{P}^{\left(1\right)}$. If $\mathcal{A}$
has a computable numbering, $\mathcal{B}$ has a Friedberg numbering,
and $\mathcal{B}$ is strongly dense in $\mathcal{A}$, then $\mathcal{A}\cup\mathcal{B}$
has a Friedberg numbering.
\end{cor}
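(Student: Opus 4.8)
The plan is to rerun Shen's construction from the proof of Theorem~\ref{thm:Fri58-Corollary-to-Theorem3} almost verbatim, now with $\mathcal{A}$ in the role of the non-odd functions and $\mathcal{B}$ in the role of the odd functions $\mathcal{O}$, exploiting that the disjointness hypothesis makes the bookkeeping cleaner than in Shen's original setting. Fix a computable numbering $\alpha$ of $\mathcal{A}$ and a Friedberg numbering $\beta$ of $\mathcal{B}$; if $\mathcal{A}=\emptyset$ there is nothing to prove, and otherwise $\mathcal{B}$ is infinite because already some single member of $\mathcal{A}$ has infinitely many extensions in $\mathcal{B}$. I would introduce a two-board game modelled on $\mathcal{G}_{0}$, but in which Bob wins when: (1) every row of the $A$-table occurs as a row of $B$; (2) every $\beta(n)$ occurs as a row of $B$; and (3) $B$ is injective. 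As in the proof of Theorem~\ref{thm:Fri58-Corollary-to-Theorem3}, a computable winning strategy for Bob then yields the statement: let Alice fill the $A$-table with the finite approximations of $\alpha$ and let Bob follow his strategy; conditions (1) and (2) make the resulting $B$ a numbering of $\mathcal{A}\cup\mathcal{B}$ (Bob will only ever write down rows destined to lie in $\mathcal{A}\cup\mathcal{B}$), condition (3) makes it injective, and it is computable since the whole play is.

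Bob's strategy is that of $\mathcal{G}_{1}$ for the $\mathcal{A}$-part, with Shen's odd-ification trick supplying what to write into the killed rows. He employs a copying assistant for each $i$, tasked with making the $i$-th row of the $A$-table occur exactly once, together with additional assistants tasked with exhausting $\mathcal{B}$. The copying assistants follow Shen's instructions verbatim --- reserve the first unused row, and if the first $k$ entries of the $i$-th $A$-row currently agree with those of an earlier row (where $k$ is the number of ``kills'' so far) then kill the reserved row, otherwise copy the current contents of the $i$-th $A$-row into it --- so that the same limit dichotomy holds: a copying assistant kills only finitely often and keeps a faithful copy of its $A$-row precisely when that row is not a duplicate. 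The one change is that Shen's one-move odd-ification is replaced by a \emph{$\mathcal{B}$-ification}: to kill her reserved row $r$, which holds some finite partial function $\sigma$ (a finite subfunction of $\alpha(i)\in\mathcal{A}$), the assistant runs a dovetailed search for an index $k$ outside the current finite \emph{registry} of already-claimed $\mathcal{B}$-indices with $\beta(k)\supseteq\sigma$; by the hypothesis on extensions there are infinitely many such $k$, so the search halts; she then adds $k$ to the registry, declares $r$ dead and committed to $\beta(k)$, from then on fills $r$ with the approximations of $\beta(k)$, and reserves a fresh row to resume copying (the commitment may stay in progress for some moves, which is harmless). The additional assistants enumerate $\beta(0),\beta(1),\dots$: when the one for $n$ starts she consults the registry, stays idle if $n$ is already there (then $\beta(n)$ is produced by the dead row committed to it), and otherwise adds $n$ to the registry, reserves a fresh row, and copies $\beta(n)$ into it forever.

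The verification proceeds as in the proof of Theorem~\ref{thm:Fri58-Corollary-to-Theorem3}: each non-duplicate $A$-row occurs exactly once among the live rows and no duplicate occurs among them; each dead row converges to a member of $\mathcal{B}$; each member of $\mathcal{B}$ occurs exactly once, since every $\beta$-index ends up either registered through a dead row or handled by its own additional assistant, and the registry keeps the commitments pairwise distinct (here injectivity of $\beta$ is used); no row stays empty (first-unused-row rule, exhausting the board since there are infinitely many assistants); and $\mathcal{A}\cap\mathcal{B}=\emptyset$ keeps a live copying row, whose limit lies in $\mathcal{A}$, from colliding with a dead or additional-assistant row, whose limit lies in $\mathcal{B}$. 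The main obstacle, and the only genuine departure from Shen's argument, is exactly that $\mathcal{B}$-ification is inherently infinitary where odd-ification was finitary: one must check that the search for a fresh $\mathcal{B}$-extension of $\sigma$ really terminates --- this is where ``infinitely many extensions'' is used --- and that the resulting, possibly long-delayed, commitments are mutually consistent and never undone, so that injectivity of the limiting $B$-table is not spoiled by kills or by commitments still in progress.
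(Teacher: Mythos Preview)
Your proposal is correct and follows essentially the same route as the paper's proof sketch: rerun the $\mathcal{G}_{0}$ strategy with Alice playing a computable numbering of $\mathcal{A}$, replace odd-ification by \emph{$\mathcal{B}$-ification} (committing a killed row to a fresh member of $\mathcal{B}$ extending its current finite content), and let additional assistants sweep up the remaining $\mathcal{B}$-indices. You spell out more of the mechanics than the paper does---the registry of claimed $\beta$-indices, the dovetailed search for an unused extension, and the infinitary nature of the kill step---but these are exactly the details the paper leaves implicit in the phrase ``making it an unused member of $\mathcal{B}$ in the limit.''
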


\begin{proof}
Let us play the game $\mathcal{G}_{0}$ where Alice fills $A$-table
with the values of some computable numbering of $\mathcal{A}$ and
Bob uses the above-mentioned strategy of $\mathcal{G}_{0}$ which
will be modified as follows. In this strategy, Bob does not ignore
cells in $A$-table, so that we do not assume that odd rows never
appear in $A$-table. The assistants use partial functions in $\mathcal{B}$
instead of odd partial functions. Replace odd-ification with \emph{$\mathcal{B}$-ification}:
instead of odd-ificating a row, fill some cells in this row making
it an unused member of $\mathcal{B}$ in the limit. The additional
assistant guarantees that each member of $\mathcal{B}$ appears in
$B$-table exactly once. Then, Bob wins, and $B$ becomes a Friedberg
numbering of $\mathcal{A}\cup\mathcal{B}$. Note that it is (effectively)
possible to find an unused member of $\mathcal{B}$ that extends a
given partial function $f$: let $\nu$ be a Friedberg numbering of
$\mathcal{B}$ and let $\set{\nu_{s}}_{s\in\mathbb{N}}$ be a finite
approximation of $\nu$. Recall that $f$ is a finite subfunction
of a member of $\mathcal{A}$, and has infinitely many extensions
in $\mathcal{B}$. At each stage, only finitely many members of $\mathcal{B}$
have been used. Hence one can find an $s\in\mathbb{N}$ and an unused
index $i\in\mathbb{N}$ such that $\nu_{s}\left(i\right)$ extends
$f$.
\end{proof}
\begin{cor}[{Pour-El and Putnam \cite[Theorem 1]{PP65}}]
Let $\mathcal{A}$ be a subset of $\mathcal{P}^{\left(1\right)}$
and $f$ be a member of $\mathcal{P}^{\left(1\right)}$ with an infinite
domain. If $\mathcal{A}$ has a computable numbering, then there exists
a subset $\mathcal{B}$ of $\mathcal{P}^{\left(1\right)}$ such that

\begin{enumerate}
\item $\mathcal{A}\subseteq\mathcal{B}$;
\item the domain of every member of $\mathcal{B}\setminus\mathcal{A}$ is
finite;
\item for any $g\in\mathcal{B}\setminus\mathcal{A}$, there exists an $h\in\mathcal{A}$
with $g\subseteq f\cup h$;
\item $\mathcal{B}$ has a Friedberg numbering.
\end{enumerate}
\end{cor}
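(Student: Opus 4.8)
The plan is to reduce the statement to Corollary~\ref{cor:Kum89-Extension-Lemma}. It is enough to produce a set $\mathcal{C}\subseteq\mathcal{P}^{\left(1\right)}$ with the four properties: $\mathcal{C}\cap\mathcal{A}=\emptyset$; $\mathcal{C}$ has a Friedberg numbering; every member of $\mathcal{C}$ has finite domain and is a subfunction of $f\cup h$ for some $h\in\mathcal{A}$; and every finite subfunction of a member of $\mathcal{A}$ has infinitely many extensions in $\mathcal{C}$. Given such a $\mathcal{C}$, put $\mathcal{B}:=\mathcal{A}\cup\mathcal{C}$. Then (1) is immediate, (4) follows from Corollary~\ref{cor:Kum89-Extension-Lemma}, and disjointness gives $\mathcal{B}\setminus\mathcal{A}=\mathcal{C}$, so the third property above yields (2) and (3). (In game terms this is just Corollary~\ref{cor:Kum89-Extension-Lemma} with Alice filling the $A$-table by a computable numbering of $\mathcal{A}$ and ``$\mathcal{B}$-ification'' done with members of this $\mathcal{C}$.)

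For the candidate $\mathcal{C}$ I would fix a computable numbering $\alpha$ of $\mathcal{A}$ and a computable enumeration of $\mathrm{dom}\left(f\right)$, and look at the finite functions of the form $\sigma\cup\left(f\restriction F\right)$, where $\sigma$ is a finite subfunction of some $\alpha\left(i\right)$ and $F$ is a finite nonempty subset of $\mathrm{dom}\left(f\right)\setminus\mathrm{dom}\left(\sigma\right)$. Each such function has finite domain and is a subfunction of $f\cup\alpha\left(i\right)$; since $\mathrm{dom}\left(f\right)$ is infinite, every such $\sigma$ has infinitely many of them as extensions; and the family of all of them is c.e., hence---equality of finite functions being decidable---a class of finite functions carrying a Friedberg numbering. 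So three of the four required properties come essentially for free.

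The remaining property, disjointness from $\mathcal{A}$, is the step I expect to be the main obstacle. As $\mathcal{A}$ is an arbitrary computably numbered class it may contain some of the functions $\sigma\cup\left(f\restriction F\right)$, and ``$g\notin\mathcal{A}$'' is not a c.e.\ property of a finite function $g$ (it is only $\Pi^{0}_{2}$ in general), so deleting $\mathcal{A}$ from the c.e.\ family of the previous paragraph need not leave a c.e.\ class. I would obtain disjointness by a finite-injury construction: against avoidance requirements ``$\alpha\left(j\right)\notin\mathcal{C}$'' (which only bite when $\alpha\left(j\right)$ has finite domain) one runs density requirements ``$\mathcal{C}$ contains at least $n$ extensions of $\sigma$'', and whenever a density requirement acts it enumerates into $\mathcal{C}$ one of the infinitely many eligible extensions $\sigma\cup\left(f\restriction F\right)$ already confirmed distinct from $\alpha\left(j\right)$ for every higher-priority avoidance requirement---possible because at any stage only finitely many eligible extensions are blocked. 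The genuinely delicate point is the degenerate case in which $\mathcal{A}$ is so rich that no $\mathcal{C}$ disjoint from $\mathcal{A}$ can meet all the density requirements---for instance $\mathcal{A}$ could already contain every $\sigma_{0}\cup\left(f\restriction F\right)$ for a fixed $\sigma_{0}$---so that Corollary~\ref{cor:Kum89-Extension-Lemma} cannot be invoked as above; this case has to be handled by a separate argument exploiting that $\mathcal{A}$ then itself contains a dense c.e.\ family of finite functions below $f\cup h$. Carrying this out uniformly---in effect, recognizing in the limit which finite functions genuinely belong to $\mathcal{A}$---is the part that needs real care, and is why this corollary is more than an immediate instance of the Extension Lemma.
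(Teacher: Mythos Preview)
Your reduction to the Extension Lemma via a pre-built disjoint class $\mathcal{C}$ hits exactly the wall you describe, and that wall is real: if $\mathcal{A}$ happens to contain every finite partial function (perfectly compatible with $\mathcal{A}$ having a computable numbering), then no nonempty $\mathcal{C}$ consisting of finite functions can be disjoint from $\mathcal{A}$ at all, so Corollary~\ref{cor:Kum89-Extension-Lemma} simply cannot be invoked. Your suggested fallback for the degenerate case---exploiting that $\mathcal{A}$ itself then contains a dense family of finite functions---amounts to proving directly that such an $\mathcal{A}$ admits a Friedberg numbering, which is the whole theorem again in that instance; nothing has been reduced. The finite-injury construction you sketch for the non-degenerate case is also not obviously uniform in the sense required: you would need to detect \emph{effectively} which regime you are in, and ``$\mathcal{A}$ contains all extensions of $\sigma_0$ of the prescribed shape'' is a $\Pi^0_2$ event.

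The paper avoids all of this by refusing to fix $\mathcal{C}$ in advance. It plays $\mathcal{G}_0$ with Alice enumerating $\mathcal{A}$, and alters Bob's strategy in just two places: (i) whenever an assistant odd-ifies her reserved row by filling a cell in column $j$, she must use the value $f(j)$; (ii) the additional assistant no longer attempts to exhaust all odd functions---she merely copies into a fresh row any odd row currently in the $A$-table that is not already present in the $B$-table, then releases that row permanently. One then \emph{defines} $\mathcal{B}:=\{B(i,\cdot):i\in\mathbb{N}\}$ after the fact. Every odd-ified row is, by construction, a finite subfunction of $f\cup h$ for the $h\in\mathcal{A}$ it was copying when abandoned, so any such row that lands in $\mathcal{B}\setminus\mathcal{A}$ automatically satisfies (2) and (3); if it happens to lie in $\mathcal{A}$ there is nothing to verify. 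Disjointness never enters, the degenerate case dissolves, and no injury argument beyond the original Friedberg strategy is needed. The moral is that the Extension Lemma is the wrong black box here precisely because the garbage class need not---and in general cannot---be specified ahead of time; only its \emph{shape} (values taken from $f$ on top of a stage-$s$ approximation to some member of $\mathcal{A}$) has to be controlled, and that is done on the fly.
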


\begin{proof}
Let us play the game $\mathcal{G}_{0}$ where Alice fills $A$-table
with the values of some computable numbering of $\mathcal{P}^{\left(1\right)}$
and Bob uses the winning strategy of $\mathcal{G}_{0}$ modified as
follows. When an assistant fills a cell in the $j$-th column making
this row odd, the value $f\left(j\right)$ must be used for filling.
Modify the instruction for the additional assistant as follows: \textit{if
there is an odd row $i$ in $A$-table such that this row is not identical
to any row in $B$-table, reserve the first unused row, copy the current
contents of the $i$-th row of $A$-table into your reserved row,
and release your reserved row. }Released rows cannot be used forever.
Here, exceptionally, the additional assistant does not ignore cells
in $A$-table. Then, Bob wins, $\mathcal{B}=\set{B\left(i,\cdot\right)|i\in\mathbb{N}}$
has the desired properties, and $B$ becomes a Friedberg numbering
of $\mathcal{B}$.
\end{proof}

\section{\label{sec:Modifications}Modifications: more boards}
\begin{thm}[{Pour-El \cite[Theorem 2]{Pou64}}]
\label{thm:Pou64-Theorem2}There are two incomparable Friedberg numberings
of $\mathcal{P}^{\left(1\right)}$.
\end{thm}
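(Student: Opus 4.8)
The plan is to derive the theorem from a winning-strategy argument for a game $\mathcal{G}_{2}$ which is, roughly, two copies of $\mathcal{G}_{0}$ glued along one common $A$-table. In $\mathcal{G}_{2}$ Alice plays a single board $A$, while Bob plays two boards $B$ and $C$ (together with the extra bookkeeping needed to run the winning strategy of \prettyref{thm:Fri58-Corollary-to-Theorem3} twice). Fix a standard enumeration $(\varphi_{e})_{e\in\mathbb{N}}$ of the partial computable functions. Bob wins $\mathcal{G}_{2}$ if \textbf{(i)} the pair $(A,B)$ satisfies the winning conditions of $\mathcal{G}_{0}$; \textbf{(ii)} the pair $(A,C)$ satisfies the winning conditions of $\mathcal{G}_{0}$; \textbf{(iii)} for every $e$ there is an $i$ such that $\varphi_{e}(i)$ is undefined or $B(i,\cdot)\neq C(\varphi_{e}(i),\cdot)$; and \textbf{(iv)} for every $e$ there is an $i$ such that $\varphi_{e}(i)$ is undefined or $C(i,\cdot)\neq B(\varphi_{e}(i),\cdot)$. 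Exactly as in \prettyref{thm:Fri58-Corollary-to-Theorem3}, a computable winning strategy for Bob suffices: letting Alice fill $A$ with the finite approximation of a computable numbering of $\mathcal{P}^{(1)}$, conditions (i) and (ii) make $B$ and $C$ Friedberg numberings of $\mathcal{P}^{(1)}$, while any reduction $B\leq C$ would be witnessed by a total computable function, hence by some $\varphi_{e}$, contradicting (iii); symmetrically (iv) forbids $C\leq B$. So $B$ and $C$ are incomparable.

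It then remains to describe a computable winning strategy for Bob, which (as indicated in the introduction) is a modification of Shen's strategy. On the ``copying'' part Bob simply runs the winning strategy of \prettyref{thm:Fri58-Corollary-to-Theorem3} twice in parallel: after the usual reduction ensuring that odd rows never appear in $A$, one family of assistants copies the rows of $A$ into fresh rows of $B$ and another copies them into fresh rows of $C$, using odd-ification in place of invalidation, so that every non-odd member of $\mathcal{P}^{(1)}$ occurs exactly once in $B$ and exactly once in $C$. For the odd functions and the diagonalisation, Bob additionally hires, for each $e$, two \emph{diagonal assistants} $D_{e}^{B\to C}$ and $D_{e}^{C\to B}$, together with a \emph{cleanup assistant} for $B$ and one for $C$; all placements of odd functions --- the odd-ifications, the diagonal assistants' functions, and the cleanup assistants' functions --- are coordinated through a single shared pool, so that each odd function is written exactly once in $B$ and exactly once in $C$. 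The assistant $D_{e}^{B\to C}$ reserves a fresh row $r$ of $B$, writes there a fresh odd function $\theta$, and dovetails the computation of $\varphi_{e}(r)$; if and when this converges with value $c$, it extends $\theta$ at row $r$ to a strictly larger fresh odd function $\theta'$ and makes $\theta'$ occur in $C$ at some fresh row other than $c$. The cleanup assistants then fill fresh rows with all the remaining odd functions, respecting the finitely many commitments already made. The assistants $D_{e}^{C\to B}$ act in the mirror-image way.

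The main obstacle, and the only place where the two boards genuinely interact, will be to check that this is consistent --- that the diagonal assistants meet requirements (iii) and (iv) without spoiling injectivity or surjectivity of $B$ or $C$ on the set $\mathcal{O}$ of odd functions. The point is that $D_{e}^{B\to C}$ performs its extension at most once, because $\varphi_{e}(r)$ converges at most once, and the fresh function $\theta'$ is created only at the stage of convergence, so it has not been written anywhere before then --- in particular not at row $c$ of $C$ --- and the shared-pool bookkeeping then keeps it off row $c$ forever; hence the eventual content of row $c$ of $C$, whether a non-odd function or an odd one, is distinct from $\theta'$. Therefore $B(r,\cdot)=\theta'\neq C(c,\cdot)$ permanently, each diagonal requirement is met after finitely many moves, the displaced functions reappear (exactly once in each of $B$ and $C$) via the cleanup assistants so that $B$ and $C$ still enumerate $\mathcal{O}$ bijectively, and Bob wins $\mathcal{G}_{2}$. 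This proves the theorem.
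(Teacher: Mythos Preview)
Your proof is correct and follows the symmetric two-board route, close in spirit to the paper's second proof (the game $\mathcal{G}_{3}$). The paper in fact gives two arguments. The first, asymmetric, one is shorter: Bob plays only a single board $B$ against Alice's $A$, where $A$ is already filled with a fixed Friedberg numbering; Bob need only diagonalise to prevent $A\leq B$, and $B\not\leq A$ then comes for free from the minimality of Friedberg numberings. The second, symmetric, proof matches your setup, with two differences. First, the candidate reductions are supplied by Alice as an extra table $R$ rather than hard-wired as $(\varphi_e)_e$; this is purely cosmetic. Second, and more interestingly, the diagonalisation witnesses are the \emph{constant} functions rather than fresh odd functions: the $2i$-th additional assistant is the sole owner of the constant function $2i$ in both $B$ and $C$, so she always knows its exact location $j$ in $C$, and if her current $B$-row happens to equal $R(i,j)$ she simply odd-ifies it and reserves a new one. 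Then $C(j,\cdot)$ is the constant $2i$ while $B(R(i,j),\cdot)$ is not, since no one else is permitted to write that constant. Your mechanism --- manufacture a fresh odd $\theta'$ at the moment $\varphi_e(r)$ converges to $c$, drop $\theta'$ into $C$ at some row other than $c$, and invoke the eventual injectivity of $C$ --- reaches the same conclusion, but its correctness rests on the shared-pool discipline being airtight across odd-ifications, diagonal extensions, displacements and cleanup; that is what makes your bookkeeping heavier. The paper's exclusive-ownership device localises the whole diagonal step to a single assistant and needs no pool for it at all.
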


The first proof is obtained from the proof of \prettyref{thm:Fri58-Corollary-to-Theorem3}
through modifying in the same way done by Pour-El.
\begin{proof}[Proof (asymmetric version)]
We consider the following game $\mathcal{G}_{2}$:

\begin{description}
\item [{Players}] Alice, Bob.
\item [{Protocol}] FOR $s=0,1,2,\ldots$:

Alice announces a finite partial function $A_{s}\colon\mathbb{N}^{2}\rightharpoonup\mathbb{N}$.

Bob announces a finite partial function $B_{s}\colon\mathbb{N}^{2}\rightharpoonup\mathbb{N}$.
\item [{Collateral duties}] $A_{s}\subseteq A_{s+1}$ and $B_{s}\subseteq B_{s+1}$
for all $s\in\mathbb{N}$.
\item [{Winner}] Let $A=\bigcup_{s\in\mathbb{N}}A_{s}$ and $B=\bigcup_{s\in\mathbb{N}}B_{s}$.
Bob wins if

\begin{enumerate}
\item for each $i\in\mathbb{N}$, there exists a $j\in\mathbb{N}$ such
that $A\left(i,\cdot\right)=B\left(j,\cdot\right)$;
\item for any $i,j\in\mathbb{N}$, if $i\neq j$, then $B\left(i,\cdot\right)\neq B\left(j,\cdot\right)$;
\item for each $i\in\mathbb{N}$, if $A\left(i,\cdot\right)$ is total,
then there exists a $j\in\mathbb{N}$ such that $B\left(A\left(i,j\right),\cdot\right)\neq A\left(j,\cdot\right)$.
\end{enumerate}
\end{description}
Suppose that there exists a computable winning strategy of $\mathcal{G}_{2}$
for Bob. Let Alice fill $A$-table with the values of some Friedberg
numbering of $\mathcal{P}^{\left(1\right)}$, and let Bob use some
computable winning strategy. Then, $A$ and $B$ are Friedberg numberings
of $\mathcal{P}^{\left(1\right)}$, and $A$ is not reducible to $B$.
Since $A$ is minimal, $B$ is also not reducible to $A$.

We describe a computable winning strategy of $\mathcal{G}_{2}$. Bob
uses the winning strategy of $\mathcal{G}_{0}$ described in the proof
of \prettyref{thm:Fri58-Corollary-to-Theorem3}, which guarantees
that the first two winning conditions are satisfied. For the third
winning condition, Bob promulgates the following instruction for the
$i$-th assistant: \textit{if the $\left(i,i\right)$-element of $A$-table
has been filled, and you have reserved the $A\left(i,i\right)$-th
row, then odd-ify the $A\left(i,i\right)$-th row.} Each of these
instructions is done at most once, because, after doing that, the
corresponding requirement is permanently satisfied. Hence they do
not interrupt satisfying the first two winning conditions. It remains
to show that the third winning condition is also satisfied. Suppose
that $A\left(i,\cdot\right)$ is total. We can assume without loss
of generality that $i$ is the least index of $A\left(i,\cdot\right)$,
i.e., there is no $j<i$ with $A\left(j,\cdot\right)=A\left(i,\cdot\right)$.
Since $A\left(i,\cdot\right)$ is not either odd or even, the $i$-th
assistant has a permanently reserved row $j$. The additional instruction
guarantees that $j\neq A\left(i,i\right)$. By the second winning
condition, we have that $B\left(A\left(i,i\right),\cdot\right)\neq B\left(j,\cdot\right)=A\left(i,\cdot\right)$.
Thus Bob wins this game.
\end{proof}
The second proof is more symmetric and parameterized. In the second
proof, we consider an infinite game where Alice constructs two partial
functions $A$ and $R$, and Bob constructs two partial functions
$B$ and $C$. Bob wins if each of $B$ and $C$ contains all the
partial functions from $A$ exactly once; and $B$ and $C$ are not
reducible to each other by any total function from $R$. Otherwise,
Alice wins. An outline of Bob's winning strategy is as follows: let
Bob fix a computable injective numbering $\varphi$ of non-odd partial
functions, which will be used as a witness of incomparability of $B$
and $C$ relative to $R$. The assistant responsible for $\varphi_{2i}$
guarantees that $R\left(i,\cdot\right)$ is not a reduction function
from $C$ to $B$. The assistant responsible for $\varphi_{2i+1}$
guarantees that $R\left(i,\cdot\right)$ is not a reduction function
from $B$ to $C$. We now describe the details.
\begin{proof}[Proof (symmetric version)]
Consider the following game $\mathcal{G}_{3}$:

\begin{description}
\item [{Players}] Alice, Bob.
\item [{Protocol}] FOR $s=0,1,2,\ldots$:

Alice announces two finite partial functions $A_{s},R_{s}\colon\mathbb{N}^{2}\rightharpoonup\mathbb{N}$.

Bob announces two finite partial functions $B_{s},C_{s}\colon\mathbb{N}^{2}\rightharpoonup\mathbb{N}$.
\item [{Collateral duties}] $A_{s}\subseteq A_{s+1}$, $R_{s}\subseteq R_{s+1}$,
$B_{s}\subseteq B_{s+1}$ and $C_{s}\subseteq C_{s+1}$ for all $s\in\mathbb{N}$.
\item [{Winner}] Let $A=\bigcup_{s\in\mathbb{N}}A_{s}$, $R=\bigcup_{s\in\mathbb{N}}R_{s}$,
$B=\bigcup_{s\in\mathbb{N}}B_{s}$ and $C=\bigcup_{s\in\mathbb{N}}C_{s}$.
Bob wins if

\begin{enumerate}
\item for each $i\in\mathbb{N}$, there exists a $j\in\mathbb{N}$ such
that $A\left(i,\cdot\right)=B\left(j,\cdot\right)$;
\item for each $i\in\mathbb{N}$, there exists a $j\in\mathbb{N}$ such
that $A\left(i,\cdot\right)=C\left(j,\cdot\right)$;
\item for any $i,j\in\mathbb{N}$, if $i\neq j$, then $B\left(i,\cdot\right)\neq B\left(j,\cdot\right)$;
\item for any $i,j\in\mathbb{N}$, if $i\neq j$, then $C\left(i,\cdot\right)\neq C\left(j,\cdot\right)$;
\item for each $i\in\mathbb{N}$, if $R\left(i,\cdot\right)$ is total,
then there exists a $j\in\mathbb{N}$ such that $B\left(R\left(i,j\right),\cdot\right)\neq C\left(j,\cdot\right)$;
\item for each $i\in\mathbb{N}$, if $R\left(i,\cdot\right)$ is total,
then there exists a $j\in\mathbb{N}$ such that $C\left(R\left(i,j\right),\cdot\right)\neq B\left(j,\cdot\right)$.
\end{enumerate}
\end{description}
Firstly, to consider the constant functions separately, we modify
the winning strategy of $\mathcal{G}_{0}$ described in the proof
of \prettyref{thm:Fri58-Corollary-to-Theorem3}. Bob promulgates the
following instruction for the $i$-th assistant: \textit{let $k$
be the number of rows such that you have already odd-ified. If, in
the current state of $A$-table, the first $k$ positions of the $i$-th
row are constant, odd-ify your reserved row.} Next, Bob hires a countable
number of additional assistants who guarantee that each of the constant
functions appears in $B$-table exactly once. At each move, all assistants
work one by one. The $i$-th additional assistant starts working at
move $i$. The instruction for the $i$-th additional assistant is
as follows: \textit{if you have no reserved row, reserve the first
unused row. If you have a reserved row, fill your reserved row so
that this row becomes the constant function $i$ in the limit.} The
modified strategy is still a computable winning strategy of $\mathcal{G}_{0}$.

Secondly, we describe a computable winning strategy of $\mathcal{G}_{3}$.
To guarantee that the first four winning conditions are satisfied,
Bob uses, for $B$-table and $C$-table, two copies of the winning
strategy of $\mathcal{G}_{0}$ described in the previous paragraph.
At each move, the strategies work one by one. For the fifth winning
condition, Bob promulgates the following instruction for the $2i$-th
additional assistant: \textit{if you have reserved the $j$-th row
in $C$-table, the $\left(i,j\right)$-element of $R$-table has been
filled, and you have reserved the $R\left(i,j\right)$-th row in $B$-table,
then odd-ify the $R\left(i,j\right)$-th row in $B$-table.} Symmetrically,
for the sixth condition, Bob promulgates the following instruction
for the $\left(2i+1\right)$-st additional assistant: \textit{if you
have reserved the $j$-th row in $B$-table, the $\left(i,j\right)$-element
of $R$-table has been filled, and you have reserved the $R\left(i,j\right)$-th
row in $C$-table, then odd-ify the $R\left(i,j\right)$-th row in
$C$-table.} Since each of these instructions is done at most once,
the first four winning conditions are still satisfied. We now prove
that the last two winning conditions are also satisfied. Suppose that
$R\left(i,\cdot\right)$ is total. The $2i$-th additional assistant
has a permanently reserved row $c$ in $C$-table. The additional
instruction guarantees that the $2i$-th assistant does not permanently
reserve the $R\left(i,c\right)$-th row in $B$-table. Hence $B\left(R\left(i,c\right),\cdot\right)$
is not identical to the constant function $2i$. It follows that $B\left(R\left(i,c\right),\cdot\right)\neq C\left(c,\cdot\right)$.
Similarly, we have that $C\left(R\left(i,b\right),\cdot\right)\neq B\left(b,\cdot\right)$
for some $b$. Thus Bob wins this game.
\end{proof}
The following can be proved in a way similar to the second proof of
\prettyref{thm:Pou64-Theorem2}. It suffices to consider an infinite
game where Bob constructs a countable number of partial functions
$B^{0},B^{1},B^{2},\ldots$ simultaneously. To ensure finiteness of
actions, Bob shall start constructing the $i$-th numbering $B^{i}$
at move $i$.
\begin{cor}[{Khutoretskii \cite[Corollary 2]{Khu69}}]
\label{cor:Khu69-Corollary2}There exists a uniformly c.e. sequence
of pairwise incomparable Friedberg numberings of $\mathcal{P}^{\left(1\right)}$.
\end{cor}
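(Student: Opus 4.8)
The plan is to imitate the symmetric proof of \prettyref{thm:Pou64-Theorem2}, but running countably many copies of Bob's strategy in parallel and performing the diagonalization \emph{across} copies. We consider the game $\mathcal{G}_4$ in which, for $s=0,1,2,\ldots$, Alice announces two finite partial functions $A_s,R_s:\mathbb{N}^2\rightharpoonup\mathbb{N}$ and Bob announces a finite partial function $B_s:\mathbb{N}^3\rightharpoonup\mathbb{N}$, with the usual monotonicity collateral duties, and we write $A,R,B$ for the unions. We read $B\left(i,\cdot,\cdot\right)$, also written $B^i$, as the $i$-th numbering Bob builds, $A\left(m,\cdot\right)$ as the $m$-th function Alice feeds him, and $R\left(k,\cdot\right)$ as the $k$-th candidate reduction function. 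Bob wins if: (1) for all $i,m$ there is an $n$ with $A\left(m,\cdot\right)=B\left(i,n,\cdot\right)$; (2) for all $i$ and all $n\neq n'$, $B\left(i,n,\cdot\right)\neq B\left(i,n',\cdot\right)$; (3) for all $a,b,k$ with $a\neq b$, if $R\left(k,\cdot\right)$ is total then there is an $n$ with $B\left(a,R\left(k,n\right),\cdot\right)\neq B\left(b,n,\cdot\right)$. A computable winning strategy for Bob will give the corollary in the familiar way: let Alice fill $A$ and $R$ with arbitrary computable numberings of $\mathcal{P}^{\left(1\right)}$; then $B$ is computable, so $\set{B^i}_{i\in\mathbb{N}}$ is a uniformly c.e. sequence; by (1) and (2) each $B^i$ is a Friedberg numbering of $\mathcal{P}^{\left(1\right)}$; and if $B^i\leq B^j$ ($i\neq j$) were witnessed by a total computable $f$, then $f=R\left(k,\cdot\right)$ for some $k$ since $R$ numbers $\mathcal{P}^{\left(1\right)}$, contradicting clause~(3) for the triple $\left(j,i,k\right)$. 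So it suffices to describe a computable winning strategy for Bob.

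Bob will run, for each $i$, a private copy of the constant-function-modified winning strategy of $\mathcal{G}_0$ from the symmetric proof of \prettyref{thm:Pou64-Theorem2}, devoted to building $B^i$; the $i$-th copy is started at move $i$, and within each copy the assistants are started one at a time, which keeps every move finite. By itself this secures clauses (1) and (2). Recall that each copy uses, besides the main assistants, additional assistants indexed by $\mathbb{N}$, the one with index $m$ being responsible for making the constant function $m$ occur exactly once in the corresponding table. Fix a computable bijection $g$ from $\set{\left(a,b,k\right)|a\neq b}$ onto $\mathbb{N}$. We assign to the requirement $\mathcal{N}_{a,b,k}$, asserting that $R\left(k,\cdot\right)$ is not a reduction function from $B^b$ to $B^a$, the constant function $m=g\left(a,b,k\right)$, and entrust its diagonalization to the additional assistant with index $m$, who uses her row in the $B^b$-copy as the eventual witness and her row in the $B^a$-copy as the row to be sabotaged. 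Concretely, to the additional assistant with index $m$, where $g^{-1}\left(m\right)=\left(a,b,k\right)$, Bob adds the instruction: \textit{if you currently have row $c$ reserved in the $B^b$-copy, $R\left(k,c\right)$ has converged, and you currently have row $R\left(k,c\right)$ reserved in the $B^a$-copy, then odd-ify that row in the $B^a$-copy.} This is the many-board analogue of the instruction given to the $2i$-th and $\left(2i+1\right)$-st additional assistants in the symmetric proof of \prettyref{thm:Pou64-Theorem2}.

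The verification will run as there. Since $a\neq b$, no requirement ever odd-ifies the $B^b$-copy row of the additional assistant with index $g\left(a,b,k\right)$, so that row stabilizes to some $c$ with $B^b\left(c,\cdot\right)$ equal to the constant function $m$; and in each copy an additional assistant has her row odd-ified at most once, by the unique instruction (if any) attached to her, after which she re-reserves a fresh row that is never disturbed again, so the new instructions fire only finitely often and do not harm clauses (1) and (2). If $R\left(k,\cdot\right)$ is total, then once $R\left(k,c\right)$ has converged the instruction prevents the $B^a$-copy additional assistant with index $m$ from permanently reserving row $R\left(k,c\right)$; since no main assistant ever permanently holds a constant row---this is exactly what the constant-function-separating modification of the main assistants accomplishes---and no other additional assistant produces the constant function $m$, the row $R\left(k,c\right)$ of the $B^a$-table is not the constant function $m$, whence $B^a\left(R\left(k,c\right),\cdot\right)\neq B^b\left(c,\cdot\right)$, which is clause~(3) for $\left(a,b,k\right)$ with witness $n=c$. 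The main obstacle I anticipate is bookkeeping rather than new ideas: one must check that the odd-ifications arising from the $\mathcal{G}_1$-to-$\mathcal{G}_0$ conversion, from the constancy detection, and from the countably many new instructions are mutually compatible and, in each table, realize every odd partial function exactly once, and that the staggered activation of copies and assistants indeed keeps every move finite so that Bob's strategy is computable; all of this goes through just as in the proofs already given, using that $g$ assigns pairwise distinct constants to distinct requirements and that each requirement acts at most once. Hence Bob wins $\mathcal{G}_4$, and the corollary follows.
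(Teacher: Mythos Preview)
Your proposal is correct and follows exactly the approach the paper sketches: run countably many copies of the constant-function-modified $\mathcal{G}_0$ strategy from the symmetric proof of \prettyref{thm:Pou64-Theorem2}, start the $i$-th copy at move $i$ to keep each action finite, and diagonalize across copies by assigning to each requirement $\mathcal{N}_{a,b,k}$ a distinct constant function via a bijection $g$. Your verification is the natural many-board analogue of the two-board argument, and the paper says nothing more than this (its proof of \prettyref{cor:Khu69-Corollary2} is a one-sentence remark); the only cosmetic discrepancy is that the paper reserves the name $\mathcal{G}_4$ for the later independence game.
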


We can also prove the following criterion in the same way.
\begin{cor}
\label{cor:another-Extension-Lemma}Let $\mathcal{A}$, $\mathcal{B}$
and $\mathcal{C}$ be disjoint subsets of $\mathcal{P}^{\left(1\right)}$.
If $\mathcal{A}$ has a computable numbering, $\mathcal{B}$ and $\mathcal{C}$
have Friedberg numberings, and $\mathcal{B}$ is strongly dense in
$\mathcal{A}\cup\mathcal{C}$, then there exists a uniformly c.e.
sequence of pairwise incomparable Friedberg numberings of $\mathcal{A}\cup\mathcal{B}\cup\mathcal{C}$.
\end{cor}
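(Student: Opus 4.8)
The plan is to run the symmetric proof of \prettyref{thm:Pou64-Theorem2} together with the countable-family device sketched just before \prettyref{cor:Khu69-Corollary2}, but with the scratch space of odd functions replaced by $\mathcal{B}$ as in the proof of \prettyref{cor:Kum89-Extension-Lemma}, and with members of $\mathcal{C}$ taking over the role that the constant functions played in \prettyref{thm:Pou64-Theorem2}. Concretely, I would play the game in which Alice fills an $A$-table with a fixed computable numbering of $\mathcal{A}$ and an $R$-table with a fixed computable numbering of $\mathcal{P}^{\left(1\right)}$ (the candidate reduction functions), while Bob fills tables $B^{0},B^{1},B^{2},\ldots$, beginning to fill $B^{n}$ only at move $n$ so that each move stays finitary. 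Bob wins if, for every $n$: every row of the $A$-table, every member of $\mathcal{B}$, and every member of $\mathcal{C}$ occurs in $B^{n}$-table exactly once; $B^{n}$-table has no repeated rows; and for all $m\neq n$ and all $e$, if $R\left(e,\cdot\right)$ is total then $B^{m}\left(R\left(e,j\right),\cdot\right)\neq B^{n}\left(j,\cdot\right)$ for some $j$. Exactly as in the earlier arguments, a computable winning strategy for Bob against this Alice produces a uniformly c.e. sequence $\set{B^{n}}_{n\in\mathbb{N}}$ of pairwise incomparable Friedberg numberings of $\mathcal{A}\cup\mathcal{B}\cup\mathcal{C}$.

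For Bob's strategy I would run, in parallel for every $n$, the Extension-Lemma strategy of \prettyref{cor:Kum89-Extension-Lemma}: main assistants copy the rows of the $A$-table into $B^{n}$-table one by one, $\mathcal{B}$-ifying a row whenever the usual duplicate test fires, while a dedicated additional assistant enumerates $\mathcal{B}$ into the $\mathcal{B}$-ified and the still-unused rows; here ``$\mathcal{B}$-ify a row'' means ``fill it so that in the limit it becomes an unused member of $\mathcal{B}$ extending its current content'', which is legitimate because $\mathcal{B}$ has a Friedberg numbering and every finite subfunction of a member of $\mathcal{A}$ has infinitely many extensions in $\mathcal{B}$. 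On top of this, fix a computable bijection $\ell\mapsto\left(m_{\ell},n_{\ell},e_{\ell}\right)$ onto the set of triples with $m_{\ell}\neq n_{\ell}$ and a Friedberg numbering $\psi$ of $\mathcal{C}$, and for each $\ell$ hire a $\mathcal{C}$-assistant whose standing duty is to build $\psi_{\ell}$ into \emph{every} table $B^{k}$; this is what makes each $B^{k}$ realise all of $\mathcal{C}$, exactly once, and at rows chosen by Bob. In addition, the $\ell$-th $\mathcal{C}$-assistant watches the $R$-table: writing $j$ for her row in $B^{n_{\ell}}$-table, if $R\left(e_{\ell},j\right)$ has converged and equals the row of $B^{m_{\ell}}$-table on which she is currently building $\psi_{\ell}$, she $\mathcal{B}$-ifies that $B^{m_{\ell}}$-row --- legitimate because its current content is a finite subfunction of $\psi_{\ell}\in\mathcal{C}$, which by hypothesis has infinitely many extensions in $\mathcal{B}$ --- and reserves a fresh $B^{m_{\ell}}$-row on which to continue building $\psi_{\ell}$.

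For the verification, first note that each diagonalisation instruction acts at most once: once $R\left(e_{\ell},j\right)$ has converged Alice can no longer change it, so after the single $\mathcal{B}$-ification the row of $B^{m_{\ell}}$-table holding $\psi_{\ell}$ differs from $R\left(e_{\ell},j\right)$ forever. Moreover that instruction perturbs only the current $B^{m_{\ell}}$-row of its own $\mathcal{C}$-assistant, and $\mathcal{B}$-ification is otherwise applied only to main-assistant rows --- never to a row already committed to $\mathcal{B}$, nor to another assistant's row --- so the $\mathcal{C}$-assistant of each index eventually owns a permanent row in every table and the Extension-Lemma bookkeeping is left undisturbed. Hence each $B^{n}$ still realises every member of $\mathcal{A}$, of $\mathcal{B}$, and of $\mathcal{C}$ exactly once and has no repeated row, so each $B^{n}$ is a Friedberg numbering of $\mathcal{A}\cup\mathcal{B}\cup\mathcal{C}$. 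Second, the diagonalisation succeeds: if $R\left(e,\cdot\right)$ is total and $\ell$ is the index with $\left(m_{\ell},n_{\ell},e_{\ell}\right)=\left(m,n,e\right)$, then $B^{n}\left(j,\cdot\right)=\psi_{\ell}$, while in the limit $\psi_{\ell}$ occupies a single row of $B^{m}$-table, which by the instruction is not $R\left(e,j\right)$; since $\mathcal{A}$, $\mathcal{B}$ and $\mathcal{C}$ are pairwise disjoint, $B^{m}\left(R\left(e,j\right),\cdot\right)\neq\psi_{\ell}=B^{n}\left(j,\cdot\right)$, so $B^{n}\nleq B^{m}$; as this holds for all $m\neq n$, the sequence is pairwise incomparable. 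Uniformity and computability are immediate, the whole construction being a single algorithm uniform in the move number.

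The step I expect to be the main obstacle is checking that $\mathcal{B}$ is still enumerated exactly once in each $B^{k}$ once these new perturbations are present: after a $\mathcal{C}$-assistant $\mathcal{B}$-ifies a row of $B^{m_{\ell}}$-table and restarts $\psi_{\ell}$ on a fresh row, the abandoned row must receive a member of $\mathcal{B}$ not already used in that table, and these abandoned rows, together with the duplicate-driven $\mathcal{B}$-ifications and the dedicated additional assistant, must jointly exhaust $\mathcal{B}$ without repetition. What makes this go through is that $\mathcal{B}$-ification is only ever applied to a row whose current content is a finite subfunction of a member of $\mathcal{A}$ or of $\mathcal{C}$ --- so the hypothesis is invoked only where it holds --- that each $\mathcal{C}$-assistant is restarted at most once, and that ``infinitely many extensions in $\mathcal{B}$'' always leaves an unused member to commit to.
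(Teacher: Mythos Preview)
Your proposal is correct and is precisely the direct game argument the paper intends by ``We can also obtain the following criterion'': you combine the countable-family device preceding \prettyref{cor:Khu69-Corollary2} with the second proof of \prettyref{thm:Pou64-Theorem2}, replacing odd functions by $\mathcal{B}$ (as in \prettyref{cor:Kum89-Extension-Lemma}) and constants by $\mathcal{C}$. The only cosmetic difference is that where the paper's two-table proof indexes the witness assistants by parity, you index them by a bijection onto triples $(m,n,e)$ with $m\neq n$, which is exactly the right generalisation to infinitely many tables; your verification that each diagonalisation fires at most once, that $\mathcal{B}$-ification is only applied to finite subfunctions of members of $\mathcal{A}\cup\mathcal{C}$, and that the $\mathcal{B}$-bookkeeping survives, covers the points the paper leaves implicit.
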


\prettyref{cor:Khu69-Corollary2} is an instance of \prettyref{cor:another-Extension-Lemma}
where $\mathcal{B}$ is the set of all odd partial functions, $\mathcal{C}$
is the set of all constant functions, and $\mathcal{A}$ is the complement
of $\mathcal{B}\cup\mathcal{C}$.
\begin{cor}[{Kummer \cite[Lemma]{Kum90b}}]
\label{cor:Kum90b-Lemma}Let $\mathcal{A}$ and $\mathcal{B}$ be
disjoint subsets of $\mathcal{P}^{\left(1\right)}$. If $\mathcal{A}$
and $\mathcal{B}$ have Friedberg numberings, and $\mathcal{B}$ is
strongly dense in $\mathcal{A}$, then there exists a uniformly c.e.
sequence of pairwise incomparable Friedberg numberings of $\mathcal{A}\cup\mathcal{B}$.
\end{cor}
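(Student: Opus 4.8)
The plan is to \emph{reduce} the statement to \prettyref{cor:another-Extension-Lemma} by carving a witness family out of $\mathcal{A}$ using its Friedberg numbering. First observe that any set carrying a Friedberg numbering is countably infinite: a computable injective surjection $\mathbb{N}\to\mathcal{A}$ has infinite image, so $\left|\mathcal{A}\right|=\aleph_{0}$; in particular $\mathcal{A}$ is infinite. Fix a Friedberg numbering $\alpha$ of $\mathcal{A}$ and set $\mathcal{A}_{0}=\set{\alpha\left(2k\right)|k\in\mathbb{N}}$ and $\mathcal{A}_{1}=\set{\alpha\left(2k+1\right)|k\in\mathbb{N}}$. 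Then $k\mapsto\alpha\left(2k\right)$ and $k\mapsto\alpha\left(2k+1\right)$ are computable injective surjections onto $\mathcal{A}_{0}$ and $\mathcal{A}_{1}$ respectively, so each of $\mathcal{A}_{0},\mathcal{A}_{1}$ has a Friedberg numbering, and $\mathcal{A}=\mathcal{A}_{0}\sqcup\mathcal{A}_{1}$.

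Next I would apply \prettyref{cor:another-Extension-Lemma} with the triple $\left(\mathcal{A}_{1},\mathcal{B},\mathcal{A}_{0}\right)$ in the roles of $\left(\mathcal{A},\mathcal{B},\mathcal{C}\right)$ there, checking that its hypotheses transfer. These three sets are pairwise disjoint, since $\mathcal{A}_{0}\cap\mathcal{A}_{1}=\emptyset$ by injectivity of $\alpha$ and $\mathcal{A}_{i}\cap\mathcal{B}\subseteq\mathcal{A}\cap\mathcal{B}=\emptyset$. The set $\mathcal{A}_{1}$ has a computable (indeed Friedberg) numbering, and $\mathcal{B}$ and $\mathcal{A}_{0}$ have Friedberg numberings. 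Finally, every finite subfunction of a member of $\mathcal{A}_{1}\cup\mathcal{A}_{0}=\mathcal{A}$ has infinitely many extensions in $\mathcal{B}$ by the hypothesis of the present statement. Hence \prettyref{cor:another-Extension-Lemma} produces a uniformly c.e. sequence of pairwise incomparable Friedberg numberings of $\mathcal{A}_{1}\cup\mathcal{B}\cup\mathcal{A}_{0}=\mathcal{A}\cup\mathcal{B}$, which is exactly what is wanted.

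So the argument is essentially a bookkeeping reduction, and the only non-formal point is the remark that a Friedberg-numbered set is infinite, which is what legitimizes splitting $\mathcal{A}$ into two infinite c.e. pieces. If one instead preferred a self-contained game proof, one would run the game behind \prettyref{cor:another-Extension-Lemma} directly: countably many staggered copies of the winning strategy of $\mathcal{G}_{0}$, one per numbering $B^{m}$ (the $m$-th started at move $m$), together with additional assistants that use members of $\mathcal{A}$ --- which lie outside $\mathcal{B}$ --- as incomparability witnesses and perform $\mathcal{B}$-ification to diagonalize. There the main obstacle is the one already met in the symmetric proof of \prettyref{thm:Pou64-Theorem2}: one must verify that the countably many incomparability requirements, one for each ordered pair of numbering indices and each candidate reduction function $R\left(i,\cdot\right)$, each need only a single dedicated witness row and are satisfied permanently once acted upon, so that they never interfere with the exhaustiveness and injectivity conditions that make each $B^{m}$ a Friedberg numbering of $\mathcal{A}\cup\mathcal{B}$.
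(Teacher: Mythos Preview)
Your proof is correct and is essentially identical to the paper's: the paper also fixes a Friedberg numbering $\nu$ of $\mathcal{A}$ and applies \prettyref{cor:another-Extension-Lemma} to the triple $\left(\set{\nu(2i)},\mathcal{B},\set{\nu(2i+1)}\right)$, which is your $(\mathcal{A}_{1},\mathcal{B},\mathcal{A}_{0})$ up to swapping the even and odd halves. Your extra remarks (that a Friedberg-numbered set is infinite, and the sketch of a direct game argument) are sound but not needed for the reduction.
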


\begin{proof}
Fix a Friedberg numbering $\nu$ of $\mathcal{A}$. Apply \prettyref{cor:another-Extension-Lemma}
to $\mathcal{A}'=\set{\nu\left(2i\right)|i\in\mathbb{N}}$, $\mathcal{B}$
and $\mathcal{C}=\set{\nu\left(2i+1\right)|i\in\mathbb{N}}$.
\end{proof}
Conversely, we can prove \prettyref{cor:another-Extension-Lemma}
using \prettyref{cor:Kum90b-Lemma}. Applying \prettyref{cor:Kum89-Extension-Lemma}
to $\mathcal{A}$ and $\mathcal{C}$, we have a Friedberg numbering
of $\mathcal{A}\cup\mathcal{C}$. By \prettyref{cor:Kum90b-Lemma},
$\left(\mathcal{A}\cup\mathcal{C}\right)\cup\mathcal{B}$ has a uniformly
c.e. sequence of pairwise incomparable Friedberg numberings.
\begin{cor}[{Kummer \cite[Theorem 1]{Kum90b}}]
\label{cor:Kum90b-Theorem1}Let $\mathcal{A}$ be a subset of $\mathcal{P}^{\left(1\right)}$.
If $\mathcal{A}$ has a Friedberg numbering, and $\mathcal{A}$ is
strongly dense in itself, then there exists a uniformly c.e. sequence
of pairwise incomparable Friedberg numberings of $\mathcal{A}$.
\end{cor}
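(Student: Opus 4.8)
The plan is to reduce the statement to \prettyref{cor:Kum90b-Lemma} by splitting $\mathcal{A}$ into two disjoint parts, one of which is dense over the other. Fix a Friedberg numbering $\nu$ of $\mathcal{A}$, and for a finite partial function $\sigma\colon\mathbb{N}\rightharpoonup\mathbb{N}$ put $N_{\sigma}=\set{n\in\mathbb{N}|\sigma\subseteq\nu\left(n\right)}$. The key preliminary observation is that $N_{\sigma}$ is empty or infinite for every $\sigma$: if $\sigma\subseteq\nu\left(n\right)$ for some $n$, then $\sigma$ is a finite subfunction of a member of $\mathcal{A}$, hence has infinitely many extensions in $\mathcal{A}$, each with a distinct $\nu$-index by injectivity of $\nu$.

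The main step is to construct a computable set $P\subseteq\mathbb{N}$ such that $P$ and $\overline{P}=\mathbb{N}\setminus P$ are both infinite and $\overline{P}\cap N_{\sigma}$ is infinite whenever $N_{\sigma}\neq\emptyset$. I would do this by a priority-free dovetailing: fix a computable enumeration $\set{\sigma_{e}}_{e\in\mathbb{N}}$ of all finite partial functions and a fair computable schedule that assigns infinitely many stages to each requirement $R_{e}$ (``put a not-yet-classified element of $N_{\sigma_{e}}$ into $\overline{P}$'') and infinitely many stages to the requirement ``put a not-yet-classified number into $P$''. At a stage $s$ serving $R_{e}$, search among the not-yet-classified $n\le s$ for one whose $s$-step approximation $\nu_{s}\left(n\right)$ already extends $\sigma_{e}$, and classify the least such $n$ into $\overline{P}$; if there is none, classify the least not-yet-classified number into $\overline{P}$. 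At a stage serving the $P$-requirement, classify the least not-yet-classified number into $P$. Then every number is classified within finitely many stages, so $P$ is computable; $P$ gains a new element at each $P$-stage, so it is infinite; and if $N_{\sigma_{e}}$ is infinite then at all sufficiently late $R_{e}$-stages a not-yet-classified witness exists (only finitely many elements of $N_{\sigma_{e}}$ have been classified, and the approximations $\nu_{s}$ eventually reveal membership), so $\overline{P}\cap N_{\sigma_{e}}$ is infinite; in particular, taking $\sigma_{e}=\emptyset$, $\overline{P}$ is infinite.

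With $P$ in hand, set $\mathcal{A}'=\set{\nu\left(n\right)|n\in P}$ and $\mathcal{B}'=\set{\nu\left(n\right)|n\in\overline{P}}$. By injectivity of $\nu$ these are disjoint, their union is $\mathcal{A}$, and both are infinite; composing $\nu$ with the computable increasing enumeration of $P$ (respectively of $\overline{P}$) exhibits a Friedberg numbering of $\mathcal{A}'$ (respectively of $\mathcal{B}'$). Moreover, a finite subfunction $\sigma$ of a member of $\mathcal{A}'$ is a finite subfunction of a member of $\mathcal{A}$, so $N_{\sigma}\neq\emptyset$, hence $\overline{P}\cap N_{\sigma}$ is infinite and $\set{\nu\left(n\right)|n\in\overline{P}\cap N_{\sigma}}$ is an infinite family of members of $\mathcal{B}'$ extending $\sigma$. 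Thus $\mathcal{A}'$ and $\mathcal{B}'$ satisfy the hypotheses of \prettyref{cor:Kum90b-Lemma}, which produces a uniformly c.e.\ sequence of pairwise incomparable Friedberg numberings of $\mathcal{A}'\cup\mathcal{B}'=\mathcal{A}$.

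The only delicate point is the construction of $P$: one must keep the search at each stage bounded, so that $P$ is genuinely computable rather than merely c.e., while still forcing $\overline{P}$ to meet every nonempty $N_{\sigma}$ infinitely often; working with the finite $s$-step approximations $\nu_{s}$ and a fixed fair schedule takes care of this. Given \prettyref{cor:Kum90b-Lemma}, the rest is routine bookkeeping.
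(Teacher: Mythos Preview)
The paper does not actually prove this corollary; it simply refers the reader to Kummer's original article. Your plan---split the index set of a Friedberg numbering $\nu$ by a computable set $P$ so that $\mathcal{A}'=\set{\nu(n)|n\in P}$ and $\mathcal{B}'=\set{\nu(n)|n\in\overline{P}}$ satisfy the hypotheses of \prettyref{cor:Kum90b-Lemma}---is exactly the natural reduction, and your preliminary observation that each $N_\sigma$ is empty or infinite is the right starting point.

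There is, however, a genuine bug in your construction of $P$. Because you classify exactly one number at every stage and that number is always an unclassified $n\le s$ (either a witness $\le s$ or the least unclassified number), an easy induction shows that after stage $s$ precisely $\{0,1,\ldots,s\}$ has been classified; hence the number classified at stage $s$ is forced to be $s$ itself. It follows that $P$ is simply the set of $P$-stages and $\overline{P}$ the set of $R_e$-stages---both determined by the schedule alone, with no dependence on $\nu$. For a suitable $\nu$ some nonempty $N_\sigma$ may then lie entirely inside $P$, and the density requirement on $\mathcal{B}'$ fails. The culprit is the combination of the bound $n\le s$ with the unconditional fallback classification.

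The fix is routine once one exploits the hypothesis to guarantee termination of an unbounded search. One clean variant: enumerate pairs $(n,t)$ instead of abstract $\sigma_e$'s, and at step $\braket{n,t}$ set $\sigma=\nu_t(n)$. This $\sigma$ is automatically a finite subfunction of $\nu(n)\in\mathcal{A}$, so $N_\sigma$ is infinite; enumerate the c.e.\ set $N_\sigma$ until an element larger than $1+\max(\overline{P}\text{ so far})$ appears, and place it in $\overline{P}$. Then $\overline{P}$ is the range of a strictly increasing computable function (hence computable with infinite complement $P$), and for every $\sigma$ with $N_\sigma\neq\emptyset$ there are infinitely many pairs $(n,t)$ with $\sigma\subseteq\nu_t(n)$, each contributing a distinct element of $N_{\nu_t(n)}\subseteq N_\sigma$ to $\overline{P}$. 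With this corrected $P$ the remainder of your argument goes through verbatim.
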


\begin{proof}
See \cite{Kum90b}.
\end{proof}
It is immediate from \prettyref{cor:Khu69-Corollary2} that the Rogers
semilattice of $\mathcal{P}^{\left(1\right)}$ has a countable antichain.
Naturally, we can ask whether the Rogers semilattice of $\mathcal{P}^{\left(1\right)}$
has a countable chain. The answer is positive. This was proved by
Khutoretskii \cite[Proof of Corollary 1]{Khu71}. Moreover, there
exists a uniformly c.e. independent sequence of Friedberg numberings
of $\mathcal{P}^{\left(1\right)}$. Consequently, the Rogers semilattice
of $\mathcal{P}^{\left(1\right)}$ is a universal countable partial
order.
\begin{thm}
There exists a uniformly c.e. independent sequence of Friedberg numberings
of $\mathcal{P}^{\left(1\right)}$.
\end{thm}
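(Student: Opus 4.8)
The plan is to extend the symmetric proof of \prettyref{thm:Pou64-Theorem2} in the way indicated just before \prettyref{cor:Khu69-Corollary2}: Bob will build a whole sequence of boards $B^{0},B^{1},B^{2},\ldots$ (starting $B^{l}$ at move $l$, so that each move stays finite) while Alice additionally fills a reduction board $R$. I would use the game $\mathcal{G}$ in which, at each move, Alice announces finite partial functions $A_{s},R_{s}:\mathbb{N}^{2}\rightharpoonup\mathbb{N}$ and Bob announces finite partial functions $B^{0}_{s},\ldots,B^{s}_{s}:\mathbb{N}^{2}\rightharpoonup\mathbb{N}$, all nondecreasing; writing $A,R,B^{l}$ for the unions, Bob wins if (1) for all $l,i$ there is an $n$ with $A\left(i,\cdot\right)=B^{l}\left(n,\cdot\right)$, (2) each $B^{l}$ is injective in its first argument, and (3) for each $i=\braket{l,e}$, if $R\left(i,\cdot\right)$ is total then there is an $n$ with $\left(\bigoplus_{j\neq l}B^{j}\right)\left(R\left(i,n\right)\right)\neq B^{l}\left(n,\cdot\right)$, where $\bigoplus_{j\neq l}B^{j}\left(\braket{m,k}\right)=B^{j_{m}}\left(k\right)$ and $j_{m}$ is the $m$-th element of $\mathbb{N}\setminus\left\{ l\right\}$. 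As in the earlier proofs a computable winning strategy for Bob will suffice: let Alice fill $A$ with a computable numbering of $\mathcal{P}^{\left(1\right)}$ and set $R\left(\braket{l,e},\cdot\right)=\varphi_{e}$, the $e$-th unary partial computable function; then by (1) and (2) each $B^{l}$ is a Friedberg numbering of $\mathcal{P}^{\left(1\right)}$, by (3) for every $l$ and every total computable $f$ (which equals some $\varphi_{e}$) the function $f$ is not a reduction $B^{l}\to\bigoplus_{j\neq l}B^{j}$, so $\set{B^{l}}_{l\in\mathbb{N}}$ is independent, and uniform c.e.-ness is immediate from computability of the strategy.

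For the strategy itself I would run, in every board $B^{l}$, the constant-separating variant of the $\mathcal{G}_{0}$-strategy introduced in the symmetric proof of \prettyref{thm:Pou64-Theorem2}: ordinary assistants copy the non-odd, non-constant rows of $A$ and keep $B^{l}$ injective, while the additional assistants form one fixed family $\set{D_{i}}_{i\in\mathbb{N}}$, with $D_{i}$ responsible for placing the constant function $i$ into \emph{every} board (so in each board all constants appear exactly once). The new ingredient is a diagonalization duty for each $D_{i}$: writing $i=\braket{l,e}$, let $n$ be $D_{i}$'s reserved row in board $B^{l}$, which holds the constant $i$ in the limit; once $R\left(i,n\right)=\braket{m,k}$ becomes defined, $D_{i}$ looks at her own reserved row in board $B^{j_{m}}$, and if it is the $k$-th row she odd-ifies that row and re-reserves a fresh one. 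The point I would lean on is that $D_{i}$ only ever odd-ifies a row she herself has reserved; since reservations are exclusive this never disturbs any other assistant, and the action happens at most once per $i$, so winning conditions (1) and (2) survive in the limit exactly as in the proofs of \prettyref{thm:Fri58-Corollary-to-Theorem3} and \prettyref{thm:Pou64-Theorem2}.

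To see that (3) holds, fix $i=\braket{l,e}$ with $R\left(i,\cdot\right)$ total. Since every assistant odd-ifies only its own reserved rows and $D_{i}$'s diagonalization odd-ifies a row of board $B^{j_{m}}$ with $j_{m}\neq l$, her source row $n$ in board $B^{l}$ is never touched, hence permanent, so $B^{l}\left(n,\cdot\right)$ equals the constant $i$ and $R\left(i,n\right)=\braket{m,k}$ is eventually defined. If $D_{i}$'s reservation in board $B^{j_{m}}$ ever becomes the $k$-th row she odd-ifies it once and never returns to it; therefore, in the limit, the $k$-th row of board $B^{j_{m}}$ is held either by an ordinary assistant (a non-constant function), by some $D_{i'}$ with $i'\neq i$ (the constant $i'\neq i$), or by nobody because it is an odd-ified (hence non-constant) row --- in every case $B^{j_{m}}\left(k,\cdot\right)$ differs from the constant $i=B^{l}\left(n,\cdot\right)$, that is, $\left(\bigoplus_{j\neq l}B^{j}\right)\left(R\left(i,n\right)\right)\neq B^{l}\left(n,\cdot\right)$. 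Thus Bob wins.

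I expect the main difficulty to be the bookkeeping that keeps these countably many diagonalizations, spread over countably many boards, from colliding: when $D_{i}$ odd-ifies a row of another board $B^{j_{m}}$ one must be certain it neither destroys another diagonalizer's witness nor spoils surjectivity or injectivity of $B^{j_{m}}$. The two conventions above --- $D_{i}$ odd-ifies only rows it has reserved, and $D_{i}$ carries the \emph{same} constant $i$ in every board, so distinct diagonalizers always occupy distinct rows --- are exactly what makes the interference disappear, and the one genuinely delicate verification is that each $D_{i}$'s source row is permanent, sketched above; granting that, conditions (1) and (2) are routine given the earlier proofs.
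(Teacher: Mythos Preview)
Your argument is correct and follows essentially the same route as the paper's proof: the paper also sets up a game $\mathcal{G}_{4}$ in which Bob runs countably many copies of the constant-separating $\mathcal{G}_{0}$-strategy from the symmetric proof of \prettyref{thm:Pou64-Theorem2}, and the $\braket{i,k}$-th additional assistant (your $D_{\braket{l,e}}$) diagonalizes by odd-ifying her own reserved row in the appropriate $B^{l'}$-board once $R$ reveals the target. Your packaging of the pair $\left(l,e\right)$ into the assistant index and your explicit verification that the source row in $B^{l}$ is permanent are just a slightly more detailed exposition of the same mechanism.
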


\begin{proof}
Consider the following game $\mathcal{G}_{4}$:

\begin{description}
\item [{Players}] Alice, Bob.
\item [{Protocol}] FOR $s=0,1,2,\ldots$:

Alice announces two finite partial functions $A_{s},R_{s}\colon\mathbb{N}^{2}\rightharpoonup\mathbb{N}$.

Bob announces $s+1$ finite partial functions $B_{s}^{k}\colon\mathbb{N}^{2}\rightharpoonup\mathbb{N}\ \left(k\leq s\right)$.
\item [{Collateral duties}] $A_{s}\subseteq A_{s+1}$, $R_{s}\subseteq R_{s+1}$
and $B_{s}^{k}\subseteq B_{s+1}^{k}\ \left(k\leq s\right)$ for all
$s\in\mathbb{N}$.
\item [{Winner}] Let $A=\bigcup_{s\in\mathbb{N}}A_{s}$, $R=\bigcup_{s\in\mathbb{N}}R_{s}$
and $B^{k}=\bigcup_{k\leq s}B_{s}^{k}\ \left(k\in\mathbb{N}\right)$.
Bob wins if

\begin{enumerate}
\item for any $k\in\mathbb{N}$ and for each $i\in\mathbb{N}$, there exists
a $j\in\mathbb{N}$ such that $A\left(i,\cdot\right)=B^{k}\left(j,\cdot\right)$;
\item for any $i,j,k\in\mathbb{N}$, if $i\neq j$, then $B^{k}\left(i,\cdot\right)\neq B^{k}\left(j,\cdot\right)$;
\item for any $k\in\mathbb{N}$ and for each $i\in\mathbb{N}$, if $R\left(i,\cdot\right)$
is total, then there exists a $j\in\mathbb{N}$ such that $\bigoplus_{l\neq k}B^{l}\left(R\left(i,j\right),\cdot\right)\neq B^{k}\left(j,\cdot\right)$.
\end{enumerate}
\end{description}
Let us explain how Bob wins this game by an effective way. To guarantee
that the first two winning conditions are satisfied, Bob uses countable
copies of the winning strategy of $\mathcal{G}_{0}$ described in
the second proof of \prettyref{thm:Pou64-Theorem2}. At each move,
the strategies work one by one. The $i$-th strategy starts running
at move $i$. To guarantee the third winning condition, Bob promulgates
the following instruction for the $\braket{i,k}$-th additional assistant:\textit{
if you have reserved the $j$-th row in $B^{k}$-table, and the $\left(i,j\right)$-element
of $R$-table has been filled, and, in addition, you have reserved
the $R\left(i,j\right)$-th row in $\bigoplus_{l\neq k}B^{l}$-table,
odd-ify the $R\left(i,j\right)$-th row in $\bigoplus_{l\neq k}B^{l}$-table.}
Each instruction is done at most once. The first two winning conditions
are still satisfied. Finally, we prove that the third winning condition
is also satisfied. Suppose that $R\left(i,\cdot\right)$ is total.
The $\braket{i,k}$-th additional assistant has a permanently reserved
row $j$ in $B^{k}$-table. So, by the additional instruction, the
$\braket{i,k}$-th additional assistant does not permanently reserve
the $R\left(i,j\right)$-th row in $\bigoplus_{l\neq k}B^{l}$-table,
so $\bigoplus_{l\neq k}B^{l}\left(R\left(i,j\right),\cdot\right)\neq B^{k}\left(j,\cdot\right)$.
Thus Bob wins this game.

\end{proof}
\begin{cor}
If $\mathcal{A}$, $\mathcal{B}$ and $\mathcal{C}$ are subsets of
$\mathcal{P}^{\left(1\right)}$ satisfying the hypotheses of \prettyref{cor:another-Extension-Lemma},
then there exists a uniformly c.e. independent sequence of Friedberg
numberings of $\mathcal{A}\cup\mathcal{B}\cup\mathcal{C}$.
\end{cor}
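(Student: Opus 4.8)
The plan is to do to the proof of \prettyref{cor:another-Extension-Lemma} what the proof of the preceding theorem (via the game $\mathcal{G}_4$) does to the game behind \prettyref{cor:Khu69-Corollary2}: keep the board layout that builds countably many tables $B^0,B^1,B^2,\ldots$ simultaneously, but replace the pairwise-incomparability winning conditions by the independence ones. As with \prettyref{cor:another-Extension-Lemma}, the hypothesis makes $\mathcal{C}$ play the role that the set of constant functions plays in $\mathcal{G}_4$ and $\mathcal{B}$ the role of the set of odd partial functions; in particular, taking $\mathcal{B}$ to be the odd partial functions, $\mathcal{C}$ the constant functions, and $\mathcal{A}$ the complement of $\mathcal{B}\cup\mathcal{C}$ recovers the preceding theorem.

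Concretely, I would consider the game $\mathcal{G}_5$ in which Alice announces two finite partial functions $A_s,R_s:\mathbb{N}^2\rightharpoonup\mathbb{N}$, Bob announces finite partial functions $B^k_s:\mathbb{N}^2\rightharpoonup\mathbb{N}$ for $k\leq s$, the collateral duties are the obvious monotonicity requirements, and Bob wins if (1) for every $k$ and every $i$ there is a $j$ with $A(i,\cdot)=B^k(j,\cdot)$; (2) $B^k(i,\cdot)\neq B^k(j,\cdot)$ whenever $i\neq j$; and (3) for every $k$ and every $i$ such that $R(i,\cdot)$ is total there is a $j$ with $\bigoplus_{l\neq k}B^l(R(i,j),\cdot)\neq B^k(j,\cdot)$. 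Given a computable winning strategy for Bob, let Alice fill the $A$-table with a computable numbering of $\mathcal{A}$ (which exists by hypothesis) and the $R$-table with a computable numbering of $\mathcal{P}^{\left(1\right)}$. Then $\set{B^k}_{k\in\mathbb{N}}$ is uniformly c.e.; each $B^k$ is injective by (2); by (1) together with Bob's strategy --- which, exactly as in \prettyref{cor:another-Extension-Lemma}, will extend the $A$-table with every member of $\mathcal{B}$ through $\mathcal{B}$-ification and a $\mathcal{B}$-covering assistant and with every member of $\mathcal{C}$ through the additional assistants, every row converging into $\mathcal{A}\cup\mathcal{B}\cup\mathcal{C}$ --- each $B^k$ is a Friedberg numbering of $\mathcal{A}\cup\mathcal{B}\cup\mathcal{C}$; and the sequence is independent by (3), since any total computable $f$ with $B^k=\bigoplus_{l\neq k}B^l\circ f$ would occur as some row $R(i,\cdot)$, contradicting (3).

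To describe Bob's winning strategy, he runs, interleaved over $k$ (the $k$-th copy starting at move $k$ so that every move stays finitary), a copy of the winning strategy of \prettyref{cor:another-Extension-Lemma} that builds $B^k$ as a Friedberg numbering of $\mathcal{A}\cup\mathcal{B}\cup\mathcal{C}$ from the $A$-table: regular assistants copy rows of the $A$-table, $\mathcal{B}$-ifying a reserved row when the copied $A$-row turns out to duplicate an earlier one; a $\mathcal{B}$-assistant makes every member of $\mathcal{B}$ appear exactly once; and, for a fixed Friedberg numbering $\gamma$ of $\mathcal{C}$, the $i$-th additional assistant (who, across the copies, is in charge of placing $\gamma(i)$ in each $B^k$-table) reserves a row and fills it toward $\gamma(i)$. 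Condition (3) is then secured exactly as in $\mathcal{G}_4$: one adds to the $i$-th additional assistant the instruction that, once $R(i,j)$ has appeared (where $j$ is the row she has reserved in $B^k$-table for $\gamma(i)$) and $R(i,j)$ points, inside the $\bigoplus_{l\neq k}B^l$-table, to the row she has reserved for $\gamma(i)$ in some $B^{l'}$-table, she $\mathcal{B}$-ifies that row of $B^{l'}$-table. The point that makes the hypothesis on $\mathcal{A}\cup\mathcal{C}$ exactly the right one is that this diagonalizing $\mathcal{B}$-ification is always applied to a row currently displaying a finite subfunction of $\gamma(i)\in\mathcal{C}$, so it never gets stuck --- one can always complete that row to a member of $\mathcal{B}$ not yet on the $B^{l'}$-board --- just as the duplicate-handling $\mathcal{B}$-ification is applied only to finite subfunctions of members of $\mathcal{A}$; these are all the $\mathcal{B}$-ifications ever performed.

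Each such instruction fires at most once, so conditions (1) and (2) are not disturbed. For (3): if $R(i,\cdot)$ is total, then the $i$-th additional assistant ends up with a permanent reserved row $j$ in $B^k$-table carrying $B^k(j,\cdot)=\gamma(i)$, and the instructions force the row of $B^{l'}$-table to which $R(i,j)$ points not to converge to $\gamma(i)$: if it is her reserved $\gamma(i)$-row in $B^{l'}$ it gets $\mathcal{B}$-ified, and if not it differs from $\gamma(i)$ by injectivity of $B^{l'}$ (condition (2)); hence $\bigoplus_{l\neq k}B^l(R(i,j),\cdot)\neq B^k(j,\cdot)$. I expect the main work to lie in the bookkeeping that each additional assistant really does acquire a permanent reserved row in every table in spite of the cross-table $\mathcal{B}$-ifications --- this is the same priority-style argument already needed for $\mathcal{G}_4$ --- while the only genuinely new ingredient relative to $\mathcal{G}_4$ is the observation above that replacing odd-ification by $\mathcal{B}$-ification causes no trouble because every $\mathcal{B}$-ification target is a finite subfunction of a member of $\mathcal{A}\cup\mathcal{C}$.
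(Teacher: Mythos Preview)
Your overall plan is exactly the (implicit) one in the paper: the corollary is stated there without proof, and the intended argument is the routine adaptation of the $\mathcal{G}_4$ proof in which odd-ification is replaced by $\mathcal{B}$-ification and the family of constant functions is replaced by a Friedberg numbering of $\mathcal{C}$, just as in the passage from the base construction to \prettyref{cor:another-Extension-Lemma}. Your observation that every $\mathcal{B}$-ification is applied to a finite subfunction of a member of $\mathcal{A}\cup\mathcal{C}$, and that this is precisely why the hypothesis on $\mathcal{A}\cup\mathcal{C}$ is the right one, is the correct point.

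There is, however, one bookkeeping slip. In the paper's $\mathcal{G}_4$ the additional assistants are indexed by pairs: the $\langle i,k\rangle$-th additional assistant carries her own witness function (there, the constant function $\langle i,k\rangle$; here, $\gamma(\langle i,k\rangle)$) and is responsible for the single requirement ``$R(i,\cdot)$ is not a reduction of $B^k$ to $\bigoplus_{l\neq k}B^l$''. With that indexing the claim ``each such instruction fires at most once'' is immediate, because the assistant's row in $B^k$ never moves and the target row in $\bigoplus_{l\neq k}B^l$, once $\mathcal{B}$-ified, is abandoned for good. Your version instead has a single $i$-th assistant placing the same $\gamma(i)$ in every $B^k$ and diagonalizing for all $k$ at once. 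That coupling is dangerous: diagonalizing for $(i,k)$ may $\mathcal{B}$-ify her $\gamma(i)$-row in some $B^{l'}$ and force her to pick a new row $j'_{l'}$ there; but $j'_{l'}$ is now the witness for $(i,l')$, so $R(i,j'_{l'})$ may point back to her current $\gamma(i)$-row in $B^{k}$, triggering another $\mathcal{B}$-ification, and Alice can keep this cycle going forever. So ``each instruction fires at most once'' is not justified as you have it, and this is not ``the same priority-style argument already needed for $\mathcal{G}_4$'' --- in $\mathcal{G}_4$ no priority argument is needed at all precisely because of the $\langle i,k\rangle$-indexing. The fix is simply to mirror the paper's indexing: let the $\langle i,k\rangle$-th additional assistant be in charge of $\gamma(\langle i,k\rangle)$ and diagonalize only for the pair $(i,k)$; then everything you wrote goes through verbatim and the termination is trivial.
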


\begin{cor}
If $\mathcal{A}$ and $\mathcal{B}$ are subsets of $\mathcal{P}^{\left(1\right)}$
satisfying the hypotheses of \prettyref{cor:Kum90b-Lemma}, then there
exists a uniformly c.e. independent sequence of Friedberg numberings
of $\mathcal{A}\cup\mathcal{B}$.
\end{cor}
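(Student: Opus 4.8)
The plan is to mimic, almost verbatim, the derivation of \prettyref{cor:Kum90b-Lemma} from \prettyref{cor:another-Extension-Lemma}, but now feeding the resulting sets into the preceding corollary (the independent-sequence analogue of \prettyref{cor:another-Extension-Lemma}) rather than into \prettyref{cor:another-Extension-Lemma} itself. So the proof should be only a couple of lines, and its content is a splitting trick rather than a new game argument.

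First I would fix a Friedberg numbering $\nu$ of $\mathcal{A}$, which exists by the hypotheses of \prettyref{cor:Kum90b-Lemma}, and separate its even- and odd-indexed parts: set $\mathcal{A}'=\set{\nu\left(2i\right)|i\in\mathbb{N}}$ and $\mathcal{C}=\set{\nu\left(2i+1\right)|i\in\mathbb{N}}$, leaving $\mathcal{B}$ unchanged. Then $i\mapsto\nu\left(2i\right)$ and $i\mapsto\nu\left(2i+1\right)$ are Friedberg numberings of $\mathcal{A}'$ and of $\mathcal{C}$, with injectivity inherited from $\nu$; in particular $\mathcal{A}'$ has a computable numbering and $\mathcal{C}$ has a Friedberg numbering, while $\mathcal{B}$ has one by hypothesis. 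The three sets are pairwise disjoint because $\nu$ is injective (so $\mathcal{A}'\cap\mathcal{C}=\emptyset$) and because $\mathcal{A}'\cup\mathcal{C}=\mathcal{A}$ is disjoint from $\mathcal{B}$.

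It then remains to verify the last hypothesis of \prettyref{cor:another-Extension-Lemma}, namely that every finite subfunction of a member of $\mathcal{A}'\cup\mathcal{C}$ has infinitely many extensions in $\mathcal{B}$. Since $\mathcal{A}'\cup\mathcal{C}$ is exactly $\mathcal{A}$, this is precisely the remaining hypothesis of \prettyref{cor:Kum90b-Lemma} that we are assuming. Hence $\mathcal{A}'$, $\mathcal{B}$, $\mathcal{C}$ satisfy the hypotheses of \prettyref{cor:another-Extension-Lemma}, and applying the preceding corollary to them yields a uniformly c.e. independent sequence of Friedberg numberings of $\mathcal{A}'\cup\mathcal{B}\cup\mathcal{C}=\mathcal{A}\cup\mathcal{B}$, which is the assertion.

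I do not expect any genuine obstacle. The only point requiring care is that the split of $\mathcal{A}$ must leave the two pieces disjoint, which is where injectivity of $\nu$ is used, and that it must preserve the ``infinitely many $\mathcal{B}$-extensions'' condition, which is automatic because the union of the two pieces is again $\mathcal{A}$. Everything else is bookkeeping identical to the proof of \prettyref{cor:Kum90b-Lemma}, with ``pairwise incomparable'' replaced by ``independent'' throughout.
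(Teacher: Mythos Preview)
Your proposal is correct and follows exactly the route the paper intends: the corollary is stated without proof, but its derivation is clearly meant to parallel the proof of \prettyref{cor:Kum90b-Lemma}, replacing the appeal to \prettyref{cor:another-Extension-Lemma} by the preceding independent-sequence corollary, which is precisely what you do. The verification of the hypotheses via the even/odd split of a Friedberg numbering of $\mathcal{A}$ is the same in both places.
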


\begin{cor}
If $\mathcal{A}$ is a subset of $\mathcal{P}^{\left(1\right)}$ satisfying
the hypotheses of \prettyref{cor:Kum90b-Theorem1}, then there exists
a uniformly c.e. independent sequence of Friedberg numberings of $\mathcal{A}$.
\end{cor}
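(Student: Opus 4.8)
The plan is to deduce the statement from the preceding corollary (the one whose hypotheses are those of \prettyref{cor:Kum90b-Lemma}) by splitting $\mathcal{A}$ appropriately. It suffices to prove the splitting claim: \emph{under the hypotheses of \prettyref{cor:Kum90b-Theorem1} one can write $\mathcal{A}$ as a disjoint union $\mathcal{A}=\mathcal{X}\cup\mathcal{Y}$ where $\mathcal{X}$ and $\mathcal{Y}$ each have a Friedberg numbering and every finite subfunction of a member of $\mathcal{X}$ has infinitely many extensions in $\mathcal{Y}$.} Granting this, $\mathcal{X}$ and $\mathcal{Y}$ satisfy exactly the hypotheses of \prettyref{cor:Kum90b-Lemma}, so the preceding corollary applied to them yields a uniformly c.e.\ independent sequence of Friedberg numberings of $\mathcal{X}\cup\mathcal{Y}=\mathcal{A}$, which is the desired conclusion. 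Nothing about independence is reproved here; the whole burden is the splitting, and everything else is imported.

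The splitting claim is the combinatorial core of \prettyref{cor:Kum90b-Theorem1}. To prove it, fix a Friedberg numbering $\varphi$ of $\mathcal{A}$ and build a computable partition $\mathbb{N}=I\sqcup J$ with both parts infinite, then set $\mathcal{X}=\set{\varphi\left(i\right)|i\in I}$ and $\mathcal{Y}=\set{\varphi\left(j\right)|j\in J}$; these are disjoint, have union $\mathcal{A}$, and, since $\varphi$ is injective and $I,J$ are computable and infinite, carry Friedberg numberings obtained by composing $\varphi$ with the increasing enumerations of $I$ and of $J$. The partition is produced by a priority construction governed by the requirements $Q_{e,n}$: ``at least $n$ indices $j\in J$ satisfy $\sigma_{e}\subseteq\varphi\left(j\right)$'' (where $\sigma_{0},\sigma_{1},\dots$ enumerate the finite partial functions), together with requirements forcing $I$ to be infinite. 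Whenever some $Q_{e,n}$ is unmet and an unassigned index $j$ with $\sigma_{e}\subseteq\varphi\left(j\right)$ has been detected, that $j$ is enumerated into $J$; the self-extension hypothesis guarantees that whenever $\sigma_{e}$ extends to a member of $\mathcal{A}$ there are infinitely many such $j$, so a fresh witness is always available. Since every finite subfunction of a member of $\mathcal{X}\subseteq\mathcal{A}$ extends to a member of $\mathcal{A}$, the met requirements force precisely the density of $\mathcal{Y}$ below $\mathcal{X}$.

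The main obstacle is this priority construction, specifically reconciling the two opposing pressures: the requirements $Q_{e,n}$ pull witnesses into $J$ while $I$ must remain infinite, and an index is recognisable as a witness for $\sigma_{e}$ only after enough computation, so witnesses can be diverted into $I$ before they are identified. The device (due to Kummer) is to first re-index $\varphi$ by a computable permutation of $\mathbb{N}$ so that, for every finite $\sigma$ extending to a member of $\mathcal{A}$, infinitely many of its witnesses announce themselves within a number of steps bounded by their own index; with this in hand one need only protect large ``fresh'' indices for $I$, each $Q_{e,n}$ of infinite type is satisfied after finitely many actions, and the infinite supply of quickly-announced witnesses ensures both that every density requirement succeeds and that infinitely many indices survive for $I$. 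This finite-injury bookkeeping is the one genuinely non-trivial point; once it is in place, the reduction to \prettyref{cor:Kum90b-Lemma} finishes the proof.

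Alternatively, one can argue inside the game framework, modifying $\mathcal{G}_{4}$ just as \prettyref{cor:Kum89-Extension-Lemma} modifies $\mathcal{G}_{0}$: Alice fills the $A$-table with a computable numbering of $\mathcal{A}$, Bob's assistants kill a row by turning it into a fresh member of the dense sub-class $\mathcal{Y}$ constructed above (rather than into an odd function), and one extra assistant lists all of $\mathcal{Y}$ exactly once. This is the same decomposition in game-theoretic dress, so I would present the reduction to \prettyref{cor:Kum90b-Lemma} as the primary line of argument.
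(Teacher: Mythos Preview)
The paper gives no proof for this corollary at all: it is listed, without argument, as the ``independent'' analogue of \prettyref{cor:Kum90b-Theorem1}, whose own proof the paper already defers with the single line ``See \cite{Kum90b}.''  The intended reasoning is therefore exactly the reduction you describe: whatever Kummer does to derive his Theorem~1 from his Lemma will equally derive the present statement from the preceding corollary (the independent analogue of \prettyref{cor:Kum90b-Lemma}).  Your plan---split $\mathcal{A}=\mathcal{X}\cup\mathcal{Y}$ so that the pair $(\mathcal{X},\mathcal{Y})$ meets the hypotheses of \prettyref{cor:Kum90b-Lemma}, then invoke the preceding corollary---is precisely that route, so your proposal matches the paper's (implicit) approach.

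Where you go further than the paper is in actually sketching the splitting construction.  Your outline is correct in shape: one builds a computable partition $\mathbb{N}=I\sqcup J$ with both halves infinite while meeting density requirements $Q_{e,n}$, and the genuine obstacle is that a witness $j$ for $\sigma_e\subseteq\varphi(j)$ may not reveal itself before $j$ is committed to $I$.  The re-indexing device you attribute to Kummer is a legitimate way past this, though the details would need more care in a self-contained write-up (in particular, the permutation has to be built so that surjectivity and the ``fast-announcement'' property do not starve each other; this is a routine but not entirely trivial finite-injury bookkeeping).  Since the paper itself imports this wholesale from \cite{Kum90b}, your level of detail already exceeds what the paper provides.

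Your alternative game-theoretic phrasing---run $\mathcal{G}_{4}$ with $\mathcal{B}$-ification in place of odd-ification, using a dense subclass $\mathcal{Y}\subseteq\mathcal{A}$---is also sound and is arguably closer to the spirit of the paper, but note that it still presupposes the same splitting $\mathcal{A}=\mathcal{X}\cup\mathcal{Y}$, so it does not bypass the one non-trivial step.
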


\section{Examples}
\begin{example}
The class of all left-c.e. reals has a uniformly c.e. independent
sequence of Friedberg numberings.
\end{example}

\begin{proof}
In this context, by `real' we mean an infinite binary sequence, and
we identify a left-c.e. real with the c.e. set that consists of all
initial segments. It suffices to check that the class of all left-c.e.
reals satisfies the hypotheses of \prettyref{cor:Kum90b-Theorem1}.
By \cite[Theorem 4]{BKH09} the class of all left-c.e. reals has a
Friedberg numbering. Every initial segment $\sigma$ of a (left-c.e.)
real can be extended to infinitely many left-c.e. reals $\sigma0^{\omega},\sigma10^{\omega},\sigma110^{\omega}$
and so on.
\end{proof}

\begin{example}
The class of all Martin-L\"of random left-c.e. reals has a uniformly
c.e. independent sequence of Friedberg numberings.
\end{example}

\begin{proof}
By \cite[Theorem 5]{BKH09} the class of all ML-random left-c.e. reals
has a Friedberg numbering. Every initial segment $\sigma$ of a (ML-random
left-c.e.) real can be extended to infinitely many ML-random left-c.e.
reals $\sigma\Omega,\sigma0\Omega,\sigma00\Omega,\ldots$, where $\Omega$
is the halting probability.
\end{proof}

\section*{Acknowledgement}

The author is grateful to Prof. Alexander Shen for helpful discussions
and comments. He recommended the author to consider the symmetric
game $\mathcal{G}_{3}$ instead of the asymmetric game $\mathcal{G}_{2}$,
and outlined the winning strategy of $\mathcal{G}_{3}$.

\bibliographystyle{amsplain}
\nocite{*}
\bibliography{\string"Game arguments in some existence theorems of Friedberg numberings\string"}

\providecommand{\bysame}{\leavevmode\hbox to3em{\hrulefill}\thinspace}
\providecommand{\MR}{\relax\ifhmode\unskip\space\fi MR }
\providecommand{\MRhref}[2]{%
  \href{http://www.ams.org/mathscinet-getitem?mr=#1}{#2}
}
\providecommand{\href}[2]{#2}
\begin{thebibliography}{10}

\bibitem{BKH09}
Paul Brodhead and Bj\o{}rn Kjos-Hanssen, \emph{Numberings and {R}andomness},
  Lecture Notes in Computer Science, vol. 5635, pp.~49--58, Springer, 2009.

\bibitem{Fri58}
Richard~M. Friedberg, \emph{Three {T}heorems on {R}ecursive {E}numeration. {I}.
  {D}ecomposition. {II}. {M}aximal {S}et. {III}. {E}numeration without
  {D}uplication}, The Journal of Symbolic Logic \textbf{23} (1958), no.~3,
  309--316.

\bibitem{Khu69}
A.~B. Khutoretskii, \emph{On the reducibility of computable numerations},
  Algebra and Logic \textbf{8} (1969), no.~2, 145--151.

\bibitem{Khu71}
\bysame, \emph{On the cardinality of the upper semilattice of computable
  numerations}, Algebra and Logic \textbf{10} (1971), no.~5, 348--352.

\bibitem{Kum89}
Martin Kummer, \emph{Numberings of ${R}_{1}\cup {F}$}, Lecture Notes in
  Computer Science, vol. 385, pp.~166--186, Springer, 1989.

\bibitem{Kum90a}
\bysame, \emph{An easy priority-free proof of a theorem of {F}riedberg},
  Theoretical Computer Science \textbf{74} (1990), no.~2, 249--251.

\bibitem{Kum90b}
\bysame, \emph{Some applications of computable one-one numberings}, Archive for
  Mathematical Logic \textbf{30} (1990), no.~4, 219--230.

\bibitem{Pou64}
Marian~Boykan Pour-El, \emph{G\"odel {N}umberings {V}ersus {F}riedberg
  {N}umberings}, Proceedings of the American Mathematical Society \textbf{15}
  (1964), no.~2, 252--256.

\bibitem{PP65}
Marian~Boykan Pour-El and Hilary Putnam, \emph{Recursively enumerable classes
  and their application to recursive sequences of formal theories}, Archiv
  f\"ur mathematische Logik und Grundlagenforschung \textbf{8} (1965), no.~3,
  104--121.

\bibitem{Rog58}
Hartley Rogers, Jr., \emph{G\"odel {N}umberings of {P}artial {R}ecursive
  {F}unctions}, The Journal of Symbolic Logic \textbf{23} (1958), 331--341.

\bibitem{She12}
Alexander Shen, \emph{Game {A}rguments in {C}omputability {T}heory and
  {A}lgorithmic {I}nformation {T}heory}, Lecture Notes in Computer Science,
  vol. 7318, pp.~655--666, Springer, 2012.

\end{thebibliography}

\end{document}